\documentclass[11pt,reqno]{amsart}

\usepackage{amsmath}
\usepackage{amsfonts}
\usepackage{amssymb}
\usepackage{graphics}
\usepackage{graphicx}
\usepackage{color}
\usepackage{hyperref}
\usepackage{cite}
\usepackage{txfonts}
\usepackage{geometry}

\geometry{
 left=30mm,
 right=30mm,
 top=30mm,
 bottom=29mm,
 bindingoffset=0mm}

\theoremstyle{plain}
\newtheorem{theorem}{Theorem}[section]
\newtheorem*{theorem*}{Theorem}

\newtheorem{lemma}{Lemma}[section]
\newtheorem{corollary}{Corollary}[section]
\theoremstyle{definition}

\theoremstyle{remark}
\newtheorem{remark}{Remark}[section]
\newtheorem{example}{Example}[section]

\numberwithin{equation}{section}

\newcommand{\e}{^\varepsilon}
\newcommand{\eps}{{\varepsilon}}
\newcommand{\ds}{\displaystyle}
\newcommand{\cupl}{\bigcup\limits}
\newcommand{\suml}{\sum\limits}
\newcommand{\intl}{\int\limits}
\newcommand{\liml}{\lim\limits}
\newcommand{\maxl}{\max\limits}
\newcommand{\minl}{\min\limits}
\newcommand{\A}{A}

\renewcommand{\a}{\alpha}
\renewcommand{\b}{\beta}

\begin{document}

\author[Andrii Khrabustovskyi]{}
\title[Opening up and control of spectral gaps of the Laplacian in periodic domains]{} 

\noindent{\LARGE\bf Opening up and control of spectral gaps of the Laplacian in periodic domains}\bigskip

\noindent{\large\bf Andrii Khrabustovskyi}\bigskip

{\small
\noindent {Research Training Group "Analysis, Simulation and Design of Nanotechnological Processes"\\
Department of Mathematics, Karlsruhe Institute of Technology

\noindent{andrii.khrabustovskyi@kit.edu}\bigskip\smallskip

\noindent{\bf Abstract.} The main result of this work is as follows: for arbitrary pairwise
disjoint finite intervals $(\alpha_j,\beta_j)\subset[0,\infty)$,
$j=1,\dots,m$ and for arbitrary $n\geq 2$ we construct a family
of periodic non-compact domains
$\{\Omega^\varepsilon\subset\mathbb{R}^n\}_{\varepsilon>0}$ such
that the spectrum of the Neumann Laplacian in $\Omega^\varepsilon$
has at least $m$ gaps when $\varepsilon$ is small enough, moreover
the first $m$ gaps tend to the intervals $(\alpha_j,\beta_j)$ as
$\varepsilon\to 0$. The constructed domain $\Omega^\varepsilon$ is
obtained by removing from $\mathbb{R}^n$ a system of periodically
distributed "trap-like" surfaces.
\textcolor{black}{The parameter $\varepsilon$ characterizes the period 
of the domain $\Omega\e$, also it is involved in a geometry of the removed 
surfaces.}

\medskip

\noindent{\bf Keywords:} periodic domains, Neumann Laplacian, spectrum, gaps, 
asymptotic analysis, photonic crystals.}\smallskip

\normalsize
\section*{Introduction}

The problem we are going to solve belongs to the spectral theory of periodic 
self-adjoint differential operators. It is known that usually the spectrum of 
such operators is a locally finite union of compact intervals called 
\textit{bands}. In general the bands may overlap. The open interval 
$(\a,\b)\subset\mathbb{R}$ is called a \textit{gap} if it has an empty 
intersection with the spectrum, but its ends belong to it.

In general the presence of gaps is not guaranteed, for example, the spectrum of 
the Laplacian in $L_2(\mathbb{R}^n)$ has no gaps: 
$\sigma(-\Delta_{\mathbb{R}^n})=[0,\infty)$. Therefore one of the central 
questions arising here is whether the gaps really exist in concrete situations. 
This question is motivated by various applications, in particular in connection 
with photonic crystals attracting much attention in recent years. Photonic 
crystals are periodic dielectric media in which electromagnetic waves of certain 
frequencies cannot propagate, which is caused by gaps in the spectrum of the 
Maxwell operator or related scalar operators. We refer to paper 
\cite{Kuchment_PC} concerning mathematical problems arising in this field.

The problem of constructing of periodic operators with spectral gaps attracts a 
lot attention in the last twenty years. Various examples were presented in 
\cite{Khrab2,Figotin1,Figotin2,Figotin3,Frielander,Hempel,Zhikov,Hoang} for 
periodic divergence type elliptic operators in $\mathbb{R}^n$, in 
\cite{HempelHerbst} for periodic Schr\"{o}dinger operators, in 
\cite{Figotin2,Filonov} for Maxwell operators with periodic coefficients in 
$\mathbb{R}^n$, in \cite{DavHar,Post,Green,Exner,Khrab1} for Laplace-Beltrami 
operators on periodic Riemannian manifolds,  in \cite{Nazarov2} for  Laplacians 
in periodic domains in $\mathbb{R}^n$. We refer to overview \cite{HempelPost} 
where these and other related
questions are discussed in more details. Also we mention papers 
\textcolor{black}{\cite{BorPan1,BorPan2,Pank,Nazarov1,Cardone1,Cardone2,Yoshi,
Bakharev,BBC,Cardone3,FriSol,Nazarov3,Nazarov4,Nazarov5}} devoted to the same 
problem for the operators posed in unbounded domains with a waveguide geometry 
(quantum waveguides). 

The present paper is devoted to spectral analysis of the Neumann Laplacians in 
periodic domains. We denote by $\mathcal{H}_n$ the set of all domains
$\Omega\subset\mathbb{R}^n$ satisfying the property
$$\exists d=d(\Omega)>0:\ \Omega=\Omega+d k,\ \forall k\in \mathbb{Z}^n$$
(i.e. $\Omega$ is periodic and the cube $(-d/2,d/2)^n$ is a period cell). Let 
$\Omega\in \mathcal{H}_n$ and $\mathcal{A}$ be the Neumann
Laplacian in $\Omega$. Operators of this type occur in various areas of physics. 
For example in the case $n=2$ the operator $\mathcal{A}$ governs the propagation 
of $H$-polarized electro-magnetic waves in a periodic dielectric medium with a 
perfectly conducting boundary. Below (see Remark \ref{remappl}) we discuss an 
application of our results to the theory of $2D$ photonic crystals.

The example of periodic domain
with gaps in the spectrum of the Neumann Laplacian was presented
in \cite{Nazarov2}. Here the authors considered the Neumann
Laplacian  in  $\mathbb{R}^2$ perforated by
$\mathbb{Z}^2$-periodic family of circular holes and proved that
the gaps in its spectrum open up when the diameter of holes is
close enough to the distance between their centers (the last one
is fixed). 

In the present work we want not only to construct a new type of
periodic domains with gaps in the spectrum of the
Neumann Laplacian but also be able to control the
edges of these gaps making them close (in some natural sense) to predefined 
intervals. Let us formulate our main result.

\begin{theorem}[Main Theorem]\label{th0}
Let $L>0$ be an arbitrarily large number and let $(\a_j,\b_j)$
($j={1,\dots,m},\ m\in \mathbb{N} $) be arbitrary intervals
satisfying
\begin{gather}\label{intervals}
0<\a_1,\quad \a_j<\b_{j}< \a_{j+1},\ j=\overline{1,m-1},\quad
\a_m<\b_{m}<L.
\end{gather}
Let $n\in\mathbb{N}\setminus\{1\}$.

Then one can construct the family of domains
$\left\{\Omega\e\in \mathcal{H}_n\right\}_{\eps>0}$ \textcolor{black}{with 
$d(\Omega\e)=\eps$} such
that the spectrum of the Neumann Laplacian in $\Omega\e$
(we denote it $\mathcal{A}\e$) has the following structure in the interval 
$[0,L]$ when $\eps$ is
small enough:
\begin{gather}\label{spec1}
\sigma(\mathcal{A}\e)\cap[0,L]=[0,L]\setminus
\left(\cupl_{j=1}^{m}(\a_j^\eps,\b_j^\eps)\right),
\end{gather}
where the intervals $(\a_j\e,\b_j\e)$ satisfy
\begin{gather}\label{spec2}
\forall j=1,\dots,m:\quad \liml_{\eps\to 0}\a_j^\eps=\a_j,\
\liml_{\eps\to 0}\b_j^\eps=\b_j.
\end{gather}
\end{theorem}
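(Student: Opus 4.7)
The construction will place, within each period cell $\eps Y$ (where $Y = (-1/2,1/2)^n$), a system of $m$ ``trap-like'' surfaces, one for each prescribed interval. Each trap is a nearly closed surface enclosing a small cavity and linked to the ambient part of the cell only by a small aperture. The $j$-th trap carries two free geometric parameters (the cavity volume and the aperture capacity), which I will tune so that the cavity produces a narrow resonant band near $\a_j$ while the coupling to the bulk determines the upper gap edge $\b_j$. The remainder of the cell (the ``bulk'') supports the part of the spectrum which eventually fills $[0,L]$ outside the gaps.

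My first analytic step is the Floquet-Bloch decomposition: since $\Omega^\eps$ is $\eps\mathbb{Z}^n$-periodic,
$$
\sigma(\mathcal{A}\e)=\cupl_{\theta\in[-\pi/\eps,\pi/\eps]^n}\sigma(\mathcal{A}\e(\theta)),
$$
where $\mathcal{A}\e(\theta)$ is the Neumann Laplacian on the period cell with $\theta$-quasi-periodic conditions and has purely discrete spectrum $\{\lambda_k\e(\theta)\}_{k\in\mathbb{N}}$. After rescaling to a fixed reference cell, I would analyse the band functions $\theta\mapsto\lambda_k\e(\theta)$ by a matched asymptotic expansion, combined with min-max arguments using test functions supported either on the bulk component or localised inside a single cavity. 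The eigenvalues then split into two families. The bulk family behaves like the quasi-periodic eigenvalues of the free Laplacian on the rescaled cell; as $\theta$ sweeps the Brillouin zone these fill out intervals which in the limit $\eps\to 0$ cover $[0,L]$ minus small neighbourhoods of the $\a_j$. The resonant family consists of modes concentrated in a single cavity; because the aperture has vanishing capacity, these eigenvalues are almost independent of $\theta$ and form extremely narrow bands whose positions are determined, via a Dirichlet-to-Neumann matching at the aperture, by a secular equation depending only on the trap geometry. Choosing the cavity parameters makes the roots of this equation reproduce the prescribed edges $\a_j$ and $\b_j$.

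The main obstacle is to promote these observations to the precise equality \eqref{spec1} with gap edges satisfying \eqref{spec2}. This requires two complementary ingredients: a lower bound showing that the bulk band functions are continuous and surjective onto the complement of the gaps, so that every point of $[0,L]\setminus\cupl_{j=1}^m(\a_j^\eps,\b_j^\eps)$ is actually attained; and an upper bound showing that \emph{no} eigenvalue $\lambda_k\e(\theta)$ falls in any interval $(\a_j^\eps,\b_j^\eps)$ for \emph{any} $\theta$. The latter is the technical crux, as it demands a uniform-in-$\theta$ control of the Floquet fibres. I expect to establish it via a norm-resolvent-style convergence of $\mathcal{A}\e(\theta)$ to a $\theta$-independent effective operator whose spectrum is precisely $[0,L]\setminus\cupl_{j=1}^m(\a_j,\b_j)$, most likely through a Zhikov-type construction that yields a Herglotz-like effective function whose poles and zeros coincide with the prescribed $\a_j$ and $\b_j$.
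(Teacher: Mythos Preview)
Your construction and broad framework match the paper: trap-like surfaces with two tunable parameters per trap (cavity volume $b_j$ and aperture capacity encoded in $d_j$), Floquet--Bloch reduction, and a Herglotz-type secular equation whose roots give the gap edges (the paper's equation $1+\sum_j \sigma_j b_j\big/\big((1-\sum_i b_i)(\sigma_j-\lambda)\big)=0$ is exactly of this form, and its invertibility is what lets one hit arbitrary prescribed $(\a_j,\b_j)$). However, your picture of the band structure is off. The resonant modes do \emph{not} form narrow bands: as $\theta$ sweeps the Brillouin zone, the $k$-th Floquet eigenvalue (for $k\le m+1$, after the $\eps^{-2}$ rescaling) ranges over an interval of width $O(1)$, not $o(1)$. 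In the limit the $k$-th band is $[\mu_{k-1},\sigma_k]$ (with $\mu_0=0$), the gaps $(\sigma_k,\mu_k)$ sit between consecutive bands, and all bands with index $k>m+1$ escape to infinity. There is no separate ``bulk family'' covering most of $[0,L]$; the first $m+1$ bands themselves tile $[0,L]$.

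More importantly, the paper sidesteps your ``technical crux'' --- uniform-in-$\theta$ control of the fibres --- by a device you do not mention: Dirichlet--Neumann bracketing. Since $H^1\supset H^1_\theta\supset \widehat H^1_0$ on the period cell, one has $\lambda_k^{N,\eps}\le\lambda_k^{\theta,\eps}\le\lambda_k^{D,\eps}$ for every $\theta$, so the left band edge $a_k^-$ is squeezed between $\eps^{-2}\lambda_k^{N,\eps}$ and $\eps^{-2}\lambda_k^{\theta_1,\eps}$ (periodic), and the right edge $a_k^+$ between $\eps^{-2}\lambda_k^{\theta_2,\eps}$ (antiperiodic) and $\eps^{-2}\lambda_k^{D,\eps}$. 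One then needs only the eigenvalue asymptotics of four \emph{fixed} problems on the cell, each handled by min--max with explicit trial functions built from the capacity potential of the aperture. The Neumann and periodic problems both yield $\mu_{k-1}$ in the limit, and the Dirichlet and antiperiodic problems both yield $\sigma_k$, pinning the band edges exactly. Your proposed route via norm-resolvent convergence to a $\theta$-independent effective operator could in principle be made to work (and is closer to the Zhikov philosophy), but the bracketing argument is far more economical and avoids uniformity issues altogether.
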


\begin{remark} In work \cite{CDV} Y. Colin de Verdi\`{e}re proved (among other
results) the following statement: for arbitrary numbers
$0=\lambda_1<\lambda_2<\dots <\lambda_m<\infty$ ($m\in\mathbb{N}$)
there exists a bounded domain $\Omega\subset \mathbb{R}^n$ ($n\geq
2$) such that the first $m$ eigenvalues of the Neumann Laplacian
in $\Omega$ are exactly $\lambda_1,\dots,\lambda_m$.  Our theorem can be 
regarded as an analogue
of this result for the Neumann Laplacians in non-compact periodic domains. 
\end{remark}

Some preliminary results towards the proof of Theorem \ref{th0}
were obtained by the author and E. Khruslov in 
\cite{Khrab3} where the case $m=1$ was considered. However the
general case $m\geq 2$ is much more complicated. Similar results for the 
Laplace-Beltrami operators on periodic
Riemannian manifolds without a boundary and for elliptic operators
in the entire space $\mathbb{R}^n$ were obtained by the author in
\cite{Khrab1} and \cite{Khrab2} correspondingly.

\begin{figure}[h]
\begin{picture}(380,170)      
      
\scalebox{0.9}{\includegraphics{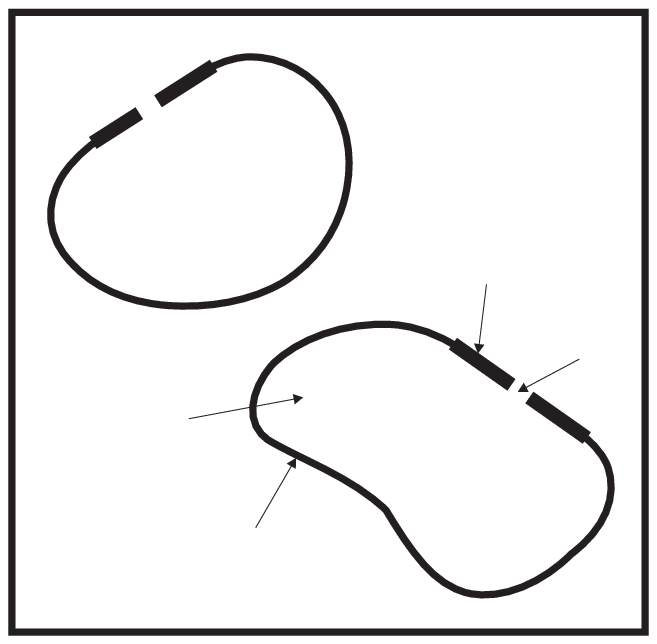}}\hspace{20mm}\scalebox{0.9}{
\includegraphics{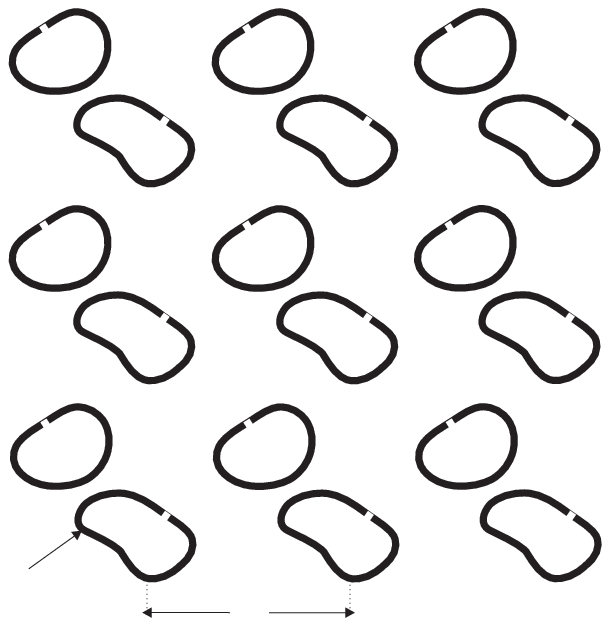}}
      \put(-229, 75){$D_{j}\e$}
      \put(-342, 50){$B_{j}$}
      \put(-327, 18){$S_{j}\e$}
      \put(-161, 10){$S_{ij}\e$}
      \put(-95, 3){$\eps$}
      \put(-285, 95){${\text{flat part of }\partial B_j}$}
    \end{picture}
  \caption{The system of screens $S_{ij}\e$. Here $m=2$.}\label{fig1}
\end{figure}

We now briefly explain how to construct the family
$\left\{\Omega\e\right\}_{\eps>0}$. Let $B_j$, $j=1,\dots,m$ be
pairwise disjoint open domains belonging to the unit cube
$(-1/2,1/2)^n$ in $\mathbb{R}^n$. We suppose that for any
$j=1,\dots,m$ $\partial B_j$ contains a flat part. Within this
flat part we make a small circular hole $D_j\e$, the obtained set
we denote by $S_j\e$ (see the left picture on Fig. \ref{fig1}):
$$S_j\e=\partial B_j\setminus D_j\e,\ j=1,\dots m.$$
Here $\eps>0$ is a parameter characterizing the size of the hole,
namely we suppose that the radius of $D_j\e$ is equal to
$d_j\eps^{2/n-2}$ if $n>2$ or $\exp(-1/d_j\eps^2)$ if $n=2$. Here
$d_j$, $j=1,\dots,m$ are positive constants. Finally we set
\begin{gather*}
\Omega\e=\mathbb{R}^n\setminus\left(\cupl_{i\in
\mathbb{Z}^n}\cupl_{j=1}^m S_{ij}\e\right),\text{ where
}S_{ij}\e=\eps(S_j\e+i),
\end{gather*}
i.e. $\Omega\e$ is obtained by removing from $\mathbb{R}^n$ $m$
families of periodically distributed "trap-like" surfaces (see
Figure \ref{fig1}, right picture). Obviously,
$\Omega\e\in\mathcal{H}_n$, the cube $(-\eps/2,\eps/2)$ is the
period cell. We denote by $\mathcal{A}\e$ the Neumann Laplacian
in $\Omega\e$ (the precise definition will be given in the next
section).

We will prove (see Theorem \ref{th1}) that for an arbitrarily
large interval $[0,L]$ the spectrum of the operator
$\mathcal{A}\e$ has exactly $m$ gaps in $[0,L]$ when $\eps$ is
small enough. Moreover when $\eps\to 0$ these gaps converge to
some intervals $(\sigma_j,\mu_j)$ ($j=1,\dots m$) depending in a
special way on the domains $B_j$ and the numbers $d_j$ and
satisfying
\begin{gather}\label{intervals_sigma}
0<\sigma_1,\quad \sigma_j<\mu_{j}< \sigma_{j+1},\ j=\overline{1,m-1},\quad
\sigma_m<\mu_{m}.
\end{gather}
Finally we will prove (see Lemma \ref{mainlemma}) that  for an
arbitrary intervals $(\a_j,\b_j)$, $j=1,\dots m$ satisfying
\eqref{intervals} one can choose $B_j$ and $d_j$ in such a way
that the equalities
$$\sigma_j=\a_j,\ \mu_j=\b_j,\ j=1,\dots,m$$
hold. For the volumes of the sets $B_j$ and for the numbers $d_j$
we will present the exact formulae. It is clear that the main
theorem follows directly from Theorem \ref{th1} and Lemma
\ref{mainlemma}.

\begin{remark}
The idea how to construct the domain $\Omega\e$  is close to the
idea which was used in \cite{Khrab2}, where the operator
$-(b\e)^{-1}\mathrm{div}(a^{\eps}\nabla)$ in $\mathbb{R}^n$ was
studied. In this work the role of "traps" is played by the family
of thin spherical shells which are $\eps\mathbb{Z}^n$-periodically
distributed in $\mathbb{R}^n$ and on which $a\e(x)$ becomes small
as $\eps\to 0$. A similar idea was also used in \cite{Khrab1}
where the periodic Laplace-Beltrami operator was studied.

The analysis of the asymptotic behaviour of spectra was carried
out in \cite{Khrab1,Khrab2} using the methods of the
homogenization theory. The idea to use this theory in order to
open up the gaps in the spectrum of periodic differential
operators was firstly proposed in \cite{Zhikov}. Since the proof
in \cite{Khrab1,Khrab2} is rather cumbersome, in the present work
we carry out the analysis using another method (see the next
remark). On the other hand the results of \cite{Khrab1, Khrab2}
helped us to guess the form of the equation \eqref{mu_eq} below
whose roots are the limits of the right ends of the spectral
bands.

Boundary value problems in domains with "traps" were also
considered in \cite{Bourgeat1,Bourgeat2}, where the authors
studied the homogenization of semi-linear parabolic equations and
their attractors. Similar homogenization problems were studied in
\cite{March}.

\end{remark}

\begin{remark}
Let us briefly describe the scheme of the proof of Theorem
\ref{th1}. We enclose the left end ({resp.} the right end) of the
$k$-th band between the $k$-th eigenvalues of the Neumann and
periodic ({resp.} the antiperiodic and Dirichlet) Laplacians posed
on the period cell. We prove that both ends of this enclosure
converge to $\mu_{k-1}$ if $k=2,\dots,m+1$ and to infinity if
$k>m+1$ ({resp.} converge to $\sigma_k$ if $k=1,\dots,m$ and to
infinity if $k>m$) as $\eps\to 0$.

The most difficult part of the proof is the investigation of the
asymptotic behaviour of the eigenvalues of the Neumann Laplacian (see Theorem
\ref{Nth}). To obtain the asymptotics of eigenvalues we will
construct convenient approximations for the corresponding
eigenfunctions. The analysis of the eigenvalues of the Dirichlet Laplacian (see
Theorem \ref{Dth}) is carried out using the same ideas but it is
essentially simpler. The analysis of the eigenvalues of the periodic ({resp.}
antiperiodic) Laplacian repeats  word-by-word the analysis for the eigenvalues 
of the
Neumann ({resp.} Dirichlet) Laplacian.

\end{remark}

\begin{remark}\label{remappl} The obtained results can be applied in the theory 
of
$2D$ photonic crystals. Let us introduce the following sets in $\mathbb{R}^3$:
$$\mathbf{\Omega}\e=\left\{(x_1,x_2,z)\in \mathbb{R}^3:\ x=(x_1,x_2)\in 
\Omega\e,\ z\in\mathbb{R}\right\},\quad \mathbf{S}\e=\mathbb{R}^3\setminus 
\mathbf{\Omega}\e,$$
where  $\Omega\e\subset \mathbb{R}^2$ is defined above. We suppose
that $\mathbf{\Omega}\e$ is occupied by a dielectric medium
whereas the union of  the screens $\mathbf{S}\e$ is occupied by a
perfectly conducting material. It is supposed that the electric
permittivity and the magnetic permeability of the material
occupying $\mathbf{\Omega}\e$ are equal to $1$. 

The propagation of electromagnetic waves in $\mathbf{\Omega}\e$ is governed by 
the Maxwell operator $\mathcal{M}\e$ (below by $E$ and $H$ we denote the 
electric and magnetic fields, $U=(E,H)$)
$$\mathcal{M}\e U=\left( i\hspace{2pt}\mathrm{curl}H,\hspace{2pt} 
-i\hspace{2pt}\mathrm{curl}E\right)$$
subject to the conditions
\begin{gather*}
\mathrm{div}E=\mathrm{div}H=0\text{ in }\mathbf{\Omega}\e,\quad
{E}_\tau=0,\ {H}_\nu=0\text{ on }\mathbf{S}\e.
\end{gather*}
Here ${E}_\tau$ and ${H}_\nu$ are the tangential and normal components of ${E}$ 
and ${H}$, correspondingly. We are interested only on the waves propagated along 
the plane $z=0$, i.e. when $E,H$ depends on $x_1,x_2$ only. 

It is known that if the medium is periodic in two directions and
homogeneous with respect to the third one (so-called $2D$ crystals) then the 
analysis of the Maxwell operator  reduces to the analysis of scalar elliptic 
operators. Let us formulate this statement more precisely. We denote 
\begin{gather*}
J=\big\{(E,H):\ \mathrm{div}E=\mathrm{div}H=0\text{ in }\mathbf{\Omega}\e,\ 
E_\tau=H_\mu=0\text{ on }\mathbf{S}\e\big\},\\
J_E=\{(E,H)\in J:\ E_1=E_2=H_3=0\},\quad J_H=\{(E,H)\in J:\ H_1=H_2=E_3=0\}.
\end{gather*}
The elements of the subspaces $J_E$ and $J_H$ are usually called $E$- and 
$H$-polarized waves. The subspaces $J_E$ and $J_H$ are $L_2$-orthogonal and each 
$U\in J$ can be represented in unique way as
$U=U_E+U_H,$ where $U_E\in J_E,\ U_H\in J_H$. Moreover $J_E$ and $J_H$ are 
invariant subspaces of $\mathcal{M}\e$. Thus $\sigma(\mathcal{M}\e)$ is a union 
of $\sigma(\mathcal{M}\e|_{J_E})$ ($E$-subspectrum) and 
$\sigma(\mathcal{M}\e|_{J_H})$ ($H$-subspectrum).

We denote by $\mathcal{A}_0\e$ and $\mathcal{A}\e$ the Dirichlet and the Neumann 
Laplacians in $\Omega\e$, correspondingly. It can be shown on a formal level of 
rigour (see, e.g, \cite{HMW}) that $\omega\in\sigma(\mathcal{M}\e|_{J_E})$ iff
$\omega^2\in\sigma(\mathcal{A}_0\e)$ and $\omega\in\sigma(\mathcal{M}\e|_{J_H})$ 
iff
$\omega^2\in\sigma(\mathcal{A}\e)$. Using Friedrichs type inequalities one can 
easily prove (see \cite[Lemma 3.1]{Khrab3}) that $(\mathcal{A}_0\e 
u,u)_{L_2(\Omega\e)}\geq a\eps^{-2}\|u\|^2_{L_2(\Omega\e)}$, $\forall 
u\in\mathrm{dom}(\mathcal{A}_0\e)$ (here $a>0$ is a constant) and therefore
 \begin{gather}\label{dir}\min\{\lambda: \lambda\in
\sigma(\mathcal{A}_0\e)\}\underset{{\eps\to 0}}\to \infty.
\end{gather} Then using Theorem \ref{th1}, Lemma \ref{mainlemma} and \eqref{dir} 
we conclude that for an arbitrarily large $L>0$ the Maxwell operator
$\mathcal{M}\e$ has $2m$ gaps in $[-L,L]$ when $\eps$ is small enough and as 
$\eps\to 0$ these gaps converge to intervals 
$\pm(\sqrt{\sigma_j},\sqrt{\mu_j})$, which can be controlled via a suitable 
choice of $B_j$ and $d_j$.

\end{remark}

\section{\label{sec1} Construction of the family $\{\Omega\e\}_{\eps}$ and main 
results}

Let $\eps>0$ be a small parameter and let $n\in
\mathbb{N}\setminus\{1\}$. Let $B_j$, $j=1,\dots m$ be arbitrary
open domains with Lipschitz boundaries satisfying the following
conditions:
\begin{itemize}
\item[$(b_1)$]\ $\overline{B_j}\cap \overline{B_k}=\varnothing$
for $j\not= k$,

\item[$(b_2)$]\ $\cupl_{j=1}^m \overline{B_j}\subset Y$, where
$$Y=\{x=(x_1,\dots,x_n)\in \mathbb{R}^n:\ |x_i|<{1/2}, \forall i\},$$

\item[$(b_3)$]\ for any $j=1,\dots,m$ the boundary of $\partial
B_j$ has a flat subset, namely 
\begin{gather*}
\exists\tilde
x^{j}\in\partial B_j\e,\ \exists r_j>0:\ \text{the set }B_{r_j}(\tilde
x^{j})\cap\partial B_j\text{ belongs to a }(n-1)\text{-dimensional
hyperplain.} 
\end{gather*}
Here by $B_{r}(x)$ we denote the ball with the
center at the point $x$ and the radius $r$.

\end{itemize}
For $j=1,\dots,m$ we denote
\begin{itemize}
\item $D_j\e=\left\{x\in \partial B:\ |x-\tilde x^j|<d_j\e\right\}$, where 
$d_j\e$ is defined by the following formula:
\begin{gather*}
d_j\e=
\begin{cases}
\ds d_j\eps^{2\over n-2},& n>2,\\
\ds \eps^{-1}\exp\left(-{1\over d_j\eps^{2}}\right),& n=2.
\end{cases}
\end{gather*}
Here $d_j$, $j=1,\dots,m$ are positive constants. It is supposed that $\eps$ is
small enough so that $d_j\e<r_j$.

\item $S\e_j=\partial B_j\setminus \left(\cupl_{j=1}^m D_j\e\right)$.

\end{itemize}
Finally we set
\begin{gather*}\label{Omega}
\Omega\e=\mathbb{R}^n\setminus\left(\cupl_{i\in
\mathbb{Z}^n}\cupl_{j=1}^m S_{ij}\e\right),\text{ where
}S_{ij}\e=\eps (S_j\e+i).
\end{gather*}

Let us define precisely the Neumann Laplacian $\mathcal{A}\e$ in $\Omega\e$. We
denote by $\eta\e[u,v]$ the sesquilinear form in
${L_{2}(\Omega\e)}$ which is defined by the formula
\begin{gather}\label{form1}
\eta\e[u,v]=\intl_{\Omega\e}\left(\nabla
u,\nabla \bar v\right)dx
\end{gather}
and the definitional domain $\mathrm{dom}(\eta\e)=H^1(\Omega\e)$.
Here $\left(\nabla u,\nabla
\overline{v}\right)=\ds\suml_{k=1}^n{\partial u\over\partial
x_k}{\partial \overline{v}\over\partial x_k}$. The form
$\eta\e[u,v]$ is densely defined closed and positive. Then (see,
e.g., \cite[Chapter 6, Theorem 2.1]{Kato}) there exists the unique
self-adjoint and positive operator $\mathcal{A}\e$ associated with
the form $\eta\e$, i.e.
\begin{gather}\label{eta-a}
(\mathcal{A}\e u,v)_{L_2(\Omega\e)}= \eta\e[u,v],\quad\forall u\in
\mathrm{dom}(\mathcal{A}\e),\ \forall  v\in \mathrm{dom}(\eta\e).
\end{gather}

We denote by $\sigma(\mathcal{A}\e)$ the spectrum of
$\mathcal{A}\e$. To describe the behaviour of
$\sigma(\mathcal{A}\e)$ as $\eps\to 0$ we need some additional
notations.

In the case $n>2$ we denote by $\kappa$ the capacity of
the disc
$$T=\left\{x=(x_1,\dots,x_n)\in \mathbb{R}^n:\ |x|<1,\ x_n=0\right\}$$
Recall (see, e.g, \cite{Landkof}) that it is defined by
$$\kappa=\inf_w \intl_{\mathbb{R}^n}|\nabla w|^2 dx,$$
where the infimum is taken over smooth and compactly supported in
$\mathbb{R}^n$ functions equal to $1$ on $T$.

We set (below $j=1,\dots,m$)
\begin{gather}\label{sigma}
\sigma_j=\begin{cases}\ds{\kappa d_j^{n-2}\over 4b_j},&n>2,\\
\ds{\pi d_j\over 2b_j},&n=2,
\end{cases}
\end{gather}
where $b_j$ is the volume of the domain $B_j$. We assume that the
numbers $d_j$ and $b_j$ are such that
\begin{gather}
\label{sigma_ord} \sigma_j<\sigma_{j+1},\ j=1,\dots m-1.
\end{gather}

Let us consider the following equation (with unknown
$\lambda\in\mathbb{C}$):
\begin{gather}\label{mu_eq}
1+\suml_{j=1}^m{\sigma_j b_j\over (1-\suml_{i=1}^m
b_i)(\sigma_j-\lambda)}=0.
\end{gather}
It is easy to show (see \cite[Subsect. 3.2]{Khrab1}) that if
\eqref{sigma_ord} holds then equation \eqref{mu_eq} has exactly
$m$ roots, they are real and interlace with $\sigma_j$. We denote
them $\mu_j$, $j=1,\dots,m$ supposing that they are renumbered in
the increasing order, i.e.
\begin{gather}\label{inter}
\sigma_j<\mu_j<\sigma_{j+1},\ j={1,\dots,m-1},\quad
\sigma_m<\mu_m<\infty.
\end{gather}

Now we can formulate the main result on the behaviour of $\sigma(\mathcal{A}\e)$ 
as $\eps\to 0$.
\begin{theorem}\label{th1}
Let $L$ be an arbitrary number satisfying $L>\mu_m$. Then the
spectrum $\sigma(\mathcal{A}\e)$ of the operator $\mathcal{A}\e$
has the following structure in $[0,L]$ when $\eps$ is small
enough:
\begin{gather}\label{th1_f1}
\sigma(\mathcal{A}\e)\cap[0,L]=[0,L]\setminus
\left(\cupl_{j=1}^{m}(\sigma_j\e,\mu_j\e)\right),
\end{gather}
where the intervals $(\sigma_j\e,\mu_j\e)$ satisfy
\begin{gather}\label{th1_f2}
\forall j=1,\dots,m:\quad \lim_{\eps\to
0}\sigma_j\e=\sigma_j,\quad \lim_{\eps\to 0}\mu_j\e=\mu_j.
\end{gather}
\end{theorem}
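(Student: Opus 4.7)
The plan is to deduce Theorem \ref{th1} from a Floquet--Bloch decomposition of $\mathcal{A}\e$ combined with the standard Dirichlet--Neumann bracketing on the period cell $Y\e:=\eps Y\cap \Omega\e$. By periodicity, $\sigma(\mathcal{A}\e)$ is the union of the ranges of the band functions $\lambda_k\e(\theta)$, $\theta\in[-\pi/\eps,\pi/\eps)^n$, obtained by imposing quasi-periodic boundary conditions on $\partial(\eps Y)\cap\Omega\e$. As is classical, the left end $a_k\e$ and right end $b_k\e$ of the $k$-th band satisfy
\begin{gather*}
\lambda_k^{N,\eps}\le a_k\e \le \lambda_k^{per,\eps},\qquad
\lambda_k^{antiper,\eps}\le b_k\e \le \lambda_k^{D,\eps},
\end{gather*}
where the four eigenvalue sequences correspond to the Laplacian on $Y\e$ with Neumann, periodic, antiperiodic, and Dirichlet conditions on $\partial(\eps Y)$ (and Neumann conditions on the trap $S_j\e$ in every case). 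So Theorem \ref{th1} reduces to showing that, as $\eps\to 0$,
\begin{gather*}
\lambda_1^{N,\eps}=0,\quad \lambda_k^{N,\eps}\to\mu_{k-1}\ (k=2,\dots,m+1),\quad \lambda_{m+2}^{N,\eps}\to\infty,\\
\lambda_k^{D,\eps}\to\sigma_k\ (k=1,\dots,m),\quad \lambda_{m+1}^{D,\eps}\to\infty,
\end{gather*}
together with the identical asymptotics for the periodic and antiperiodic problems (which I expect to follow from the same test-function construction, since the quasi-periodicity enters only through boundary conditions on $\partial(\eps Y)$, a region well-separated from the traps).

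The Dirichlet asymptotics (Theorem \ref{Dth}) should be the easier half. In the rescaled variable $y=x/\eps$, the domain $Y\e$ becomes the fixed region $Y\setminus\bigcup_j S_j\e$. Because of the Dirichlet condition on $\partial Y$, eigenfunctions are essentially trapped inside the sets $B_j$; the small aperture $D_j\e$ provides only a capacity-sized leak of order $\kappa d_j^{n-2}$ (or the corresponding $2$D scale), producing exactly one eigenvalue per trap converging to $\sigma_j$ as defined in \eqref{sigma}. The remaining eigenvalues blow up like $\eps^{-2}$ because functions supported outside $\bigcup_j B_j$ satisfy a Friedrichs inequality on a set of vanishing diameter.

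The genuinely hard step is the Neumann asymptotic (Theorem \ref{Nth}). Here constants are admissible, so $\lambda_1^{N,\eps}=0$; the point is to locate the next $m$ eigenvalues. I would construct, for each $\lambda\in\mathbb{R}$ which is not a $\sigma_j$, an explicit model function $u_\lambda\e$ that is essentially constant (value $c_j$) inside each trap $B_j$ and piecewise constant (value $c_0$) outside all the traps, interpolated through the apertures $D_j\e$ by a rescaled capacity potential $w_j\e$ (a quasi-minimizer for the $D_j\e$-capacity in a ball). A direct computation of $\eta\e[u_\lambda\e,u_\lambda\e]$ and $\|u_\lambda\e\|^2$, using the capacity asymptotics $\int|\nabla w_j\e|^2\sim \kappa d_j^{n-2}\eps^{n-2}$ for $n>2$ (and the logarithmic analogue for $n=2$), produces a Rayleigh quotient whose minimization over $(c_0,c_1,\dots,c_m)\in\mathbb{C}^{m+1}$ leads, in the limit, to a finite-dimensional generalized eigenvalue problem whose secular equation is exactly \eqref{mu_eq}. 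This gives the upper bound $\lambda_k^{N,\eps}\le\mu_{k-1}+o(1)$ for $k=2,\dots,m+1$.

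The matching lower bound is the main obstacle and requires a compactness/two-scale type argument. One takes a sequence of normalized eigenfunctions $u_k\e$ with $\lambda_k^{N,\eps}$ bounded, extracts a subsequence, and must show that $u_k\e$ asymptotically decomposes into constants $c_j$ on each $B_j$ and a constant $c_0$ outside, with the limit values tied by the same linear system produced in the construction above. The key estimate is that the Dirichlet energy forces $u_k\e$ to be nearly constant on each connected component of the rescaled cell outside a shrinking neighbourhood of the apertures, while the flux through each $D_j\e$ is controlled by the capacity, which yields the coupling $(c_j-c_0)$ terms in \eqref{mu_eq}. Once these limit constraints are established, a variational comparison with the $(m+1)$-dimensional problem derived from \eqref{mu_eq} yields $\liminf \lambda_k^{N,\eps}\ge \mu_{k-1}$, completing Theorem \ref{Nth}; substitution into the bracketing and the band structure \eqref{th1_f1}--\eqref{th1_f2} then follows.
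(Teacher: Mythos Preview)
Your proposal is correct and follows essentially the same route as the paper: Floquet--Bloch reduction, Neumann/periodic/antiperiodic/Dirichlet bracketing on the cell, test functions that are piecewise constant on the $B_j$ and joined across the apertures by rescaled capacity potentials, and Poincar\'e/Friedrichs compactness for the eigenfunctions reducing the limit to a finite-dimensional quadratic form on $\mathbb{R}^{m+1}$. The only step where the paper works noticeably harder than your sketch suggests is the identification of the resulting secular equation with \eqref{mu_eq}: rather than reading it off directly, the paper passes through an ellipsoid-versus-sphere tangency picture and an explicit determinantal identity (Lemmas~\ref{Nlm4}--\ref{Nlm5}), but the conclusion is exactly the one you anticipate.
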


Theorem \ref{th1} shows that $\sigma(\mathcal{A}\e)$ has exactly $m$ gaps when 
$\eps$ is small enough and when $\eps\to 0$ these gaps converge to the  
intervals $(\sigma_j,\mu_j)$. Now, our goal is to find such numbers $d_j$ and 
domains $B_j$ that the corresponding intervals $(\sigma_j,\mu_j)$ coincide with 
the predefined ones.

We use the notations $d=(d_1,\dots,d_m),\ b=(b_1,\dots,b_m),\ 
\sigma=(\sigma_1,\dots,\sigma_m),\ \mu=(\mu_1,\dots,\mu_m)$. Let 
$$\mathcal{L}:\mathbb{R}^m\times \mathbb{R}^m\to\mathbb{R}^m\times 
\mathbb{R}^m,\quad (d,b)\overset{\mathcal{L}}\mapsto (\sigma,\mu)$$ be the map 
with the definitional domain
$$\mathrm{dom}(\mathcal{L})=\bigg\{(d,b)\in\mathbb{R}^m\times \mathbb{R}^m:\ 
d_j>0,\ b_j>0, \suml_{j=1}^mb_j<1 \text{ and }(\ref{sigma_ord})\text{ 
holds}\bigg\}$$
 and acting according to formulae \eqref{sigma}, \eqref{mu_eq}, \eqref{inter} 
(i.e. $\sigma_j$ are defined by \eqref{sigma} and $\mu_j$ are the roots of 
equation \eqref{mu_eq} renumbered according to \eqref{inter}).

\begin{lemma}\label{mainlemma}
The map ${\mathcal{L}}$ maps $\mathrm{dom}(\mathcal{L})$ onto the set
$$\mathcal{G}=\left\{(\sigma,\mu)\in\mathbb{R}^m\times\mathbb{R}^m:\
\sigma_j<\mu_j<\sigma_{j+1},\ j={1,\dots,m-1},\quad
\sigma_m<\mu_m<\infty\right\}.$$ Moreover $\mathcal{L}$ is
one-to-one and the inverse map $\mathcal{L}^{-1}$ is given by the
following formulae:
\begin{gather}\label{mlm1}
d_j=\begin{cases}\ds\left({4\sigma_j \rho_j\over\kappa\left(1+\suml_{i=1}^m 
\rho_i\right)}\right)^{{1\over n-2}},& n>2,\\
\ds{2\sigma_j \rho_j\over \pi \left(1+\suml_{i=1}^m
\rho_i\right)}, & n=2,
\end{cases}\\\label{mlm2} b_j={\rho_j\over 1+\suml_{i=1}^m
\rho_i},
\end{gather}
where
\begin{gather}\label{rho}
\rho_j={\mu_j-\sigma_j\over\sigma_j}\prod\limits_{i={1,\dots,m}|i\not=
j}\ds\left({\mu_i-\sigma_j\over \sigma_i-\sigma_j}\right).
\end{gather}
\end{lemma}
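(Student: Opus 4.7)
The plan is to reduce the defining equation \eqref{mu_eq} of the $\mu_j$ to a polynomial identity and then read off the inverse map via a Lagrange-interpolation argument. I would set $c = 1 - \sum_{i=1}^m b_i$ and multiply \eqref{mu_eq} by $c\prod_{j}(\sigma_j - \lambda)$; this recasts the equation as
\begin{equation*}
P(\lambda):=c\prod_{j=1}^m(\sigma_j-\lambda) + \sum_{j=1}^m \sigma_j b_j \prod_{i\neq j}(\sigma_i-\lambda) = 0.
\end{equation*}
Only the first summand contributes to the top-degree term, so $P$ has degree exactly $m$ with leading coefficient $c(-1)^m$. Its zeros are precisely $\mu_1,\dots,\mu_m$ (existence of these $m$ real roots, interlaced with the $\sigma_j$, is already noted after \eqref{mu_eq}), so comparing with $c\prod_j(\mu_j-\lambda)$, which has the same degree, leading coefficient, and roots, yields the identity
\begin{equation*}
c\prod_{j=1}^m(\sigma_j-\lambda) + \sum_{j=1}^m \sigma_j b_j \prod_{i\neq j}(\sigma_i-\lambda) = c\prod_{j=1}^m(\mu_j-\lambda).
\end{equation*}

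Next I would evaluate this identity at $\lambda = \sigma_k$, which annihilates every term of the sum except the one with $j=k$ and gives $\sigma_k b_k \prod_{i\neq k}(\sigma_i - \sigma_k) = c\prod_j(\mu_j - \sigma_k)$. Solving for $b_k$ produces $b_k = c\rho_k$ with $\rho_k$ as in \eqref{rho}. Summing over $k$ and using $c = 1-\sum_k b_k$ gives $c = 1/(1+\sum_i\rho_i)$, which is formula \eqref{mlm2}; formula \eqref{mlm1} then follows by solving \eqref{sigma} for $d_j$. This simultaneously establishes injectivity of $\mathcal{L}$ and exhibits the only possible preimage of a prescribed $(\sigma,\mu)$.

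For surjectivity I would verify that, for every $(\sigma,\mu)\in\mathcal{G}$, the quantities $(d,b)$ produced by \eqref{mlm1}--\eqref{mlm2} actually lie in $\mathrm{dom}(\mathcal{L})$ and are mapped back to $(\sigma,\mu)$ by $\mathcal{L}$. The key observation is positivity of each $\rho_j$ under the interlacing condition \eqref{intervals_sigma}: for $i>j$ both $\mu_i-\sigma_j$ and $\sigma_i-\sigma_j$ are positive, while for $i<j$ both are negative, so every factor in \eqref{rho} is strictly positive. Consequently $d_j,b_j>0$ and $\sum_j b_j = (\sum_j\rho_j)/(1+\sum_i\rho_i)<1$, so $(d,b)\in\mathrm{dom}(\mathcal{L})$. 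Running the derivation in reverse then confirms that the equation \eqref{mu_eq} associated to this $(d,b)$ has precisely $\mu_1,\dots,\mu_m$ as its roots and that \eqref{sigma} recovers the prescribed $\sigma_j$.

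The main obstacle is not conceptual but purely bookkeeping: one must track signs and leading coefficients carefully when identifying the two polynomials of degree $m$, and execute the sign analysis for $\rho_j$ using the interlacing of the $\sigma_j$'s and $\mu_j$'s. Once the polynomial identity above is in place, everything else is formal algebraic manipulation.
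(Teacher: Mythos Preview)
Your argument is correct and follows the same overall arc as the paper's proof: reduce to the unknowns $\rho_j=b_j/c$ with $c=1-\sum_i b_i$, obtain the explicit formula \eqref{rho}, recover $b_j$ and then $d_j$, and use the interlacing to check $\rho_j>0$, $b_j>0$, $\sum_j b_j<1$. The difference is in how the formula for $\rho_j$ is obtained. The paper observes that the $\rho_j$ satisfy the linear system \eqref{mu_eq_ro} and then \emph{cites} an external result (\cite[Lemma~4.1]{Khrab1}) to assert that this system has the unique solution \eqref{rho}. You instead clear denominators to obtain a degree-$m$ polynomial, identify it with $c\prod_j(\mu_j-\lambda)$ by matching degree, leading coefficient, and roots, and then evaluate at $\lambda=\sigma_k$ to read off $b_k=c\rho_k$ directly via Lagrange interpolation. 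Your route is slightly more elementary and entirely self-contained, avoiding the external citation; the paper's route makes the linear-system structure explicit, which connects to the author's earlier work. Both arrive at the same formulas and the same sign analysis for surjectivity.
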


\begin{proof}
Let  $(\sigma,\mu)$ be an arbitrary element of $\mathcal{G}$. We have to show 
that
$$\exists ! (d,b)\in\mathrm{dom}(\mathcal{L})\text{ such that }\forall j=1,\dots 
m \ \begin{cases}\eqref{sigma}\text{ holds },\\\eqref{mu_eq}\text{ holds with } 
\lambda=\mu_j,\end{cases}$$
moreover, this $(d,b)$ is defined by formulae \eqref{mlm1}-\eqref{rho}.

At first we find $b_1,\dots,b_m$. Let us consider the following system of $m$ 
linear equations with respect to unknowns $\rho_1,\dots,\rho_n$:
\begin{gather}\label{mu_eq_ro}
1+\suml_{j=1}^m{\sigma_j\rho_j\over \sigma_j-\mu_j}=0,\
j=1,\dots,m.
\end{gather}
It is proved in \cite[Lemma 4.1]{Khrab1} that this system has the
unique solution which is defined by formula \eqref{rho}. Therefore
in view of \eqref{mu_eq} in order to find $b_j$ we need to solve
the following system:
\begin{gather*}
{b_j(1-\suml_{i=1}^m b_i)^{-1}}=\rho_j,\ j=1,\dots,m.
\end{gather*}
It is clear that it has the unique solution $b_1,\dots,b_m$ which is defined by 
\eqref{mlm2}.
Since $(\sigma,\mu)\in\mathcal{G}$ then
\begin{gather*}
\forall j:\ \mu_j>\sigma_j;\quad \forall i\not= j:\
\mathrm{sign}(\mu_i-\sigma_j)=\mathrm{sign}(\sigma_i-\sigma_j)\not= 0
\end{gather*}
and hence $\rho_j>0$. Therefore $b_j>0$ and $\suml_{j=1}^m b_j<1$.

Finally knowing $b_j$ we express $d_j$ from \eqref{sigma} and obtain the formula
\eqref{mlm1}. The lemma is proved.
\end{proof}

Now, Theorem \ref{th0} follows \textcolor{black}{directly from} Theorem 
\ref{th1} and Lemma \ref{mainlemma}. Indeed, let $(\mathbf{\a}_j,\b_j)$, 
$j={1,\dots,m}$ be arbitrary intervals satisfying (\ref{intervals}) (and 
therefore by Lemma \ref{mainlemma} $(\a,\b)\in\mathrm{image}(\mathcal{L})$). We 
define the numbers $d_j$, $b_j$ by formulae \eqref{mlm1}-\eqref{mlm2} with 
$\a_j,\b_j$ instead of $\sigma_j$, $\mu_j$. For the obtained numbers $b_j$ we 
construct the domains $B_1,\dots B_m$ satisfying $(b_1)-(b_3)$ and such that
\begin{gather}\label{volume}
|B_j|=b_j\text{ for }j={1,\dots,m}
\end{gather}
(it is easy to do, see example below for one of possible constructions). Finally 
using the domains $B_1,\dots,B_m$ and the numbers $d_1,\dots d_m$ we construct 
the family of periodic domains  $\{\Omega\e\}_\eps$. In view of Theorem 
\ref{th1} the corresponding family of Neumann Laplacians 
$\{\mathcal{A}\e\}_\eps$ satisfies \eqref{spec1}-\eqref{spec2}.

\begin{example}  Let $b_j$, $j=1,\dots,m$ be arbitrary numbers satisfying 
$$b_j>0,\ j=1,\dots,m\text{ and } \suml_{j=1}^m b_j<1.$$
We present one of the possible choices of the domains $B_j$ satisfying 
$(b_1)-(b_3)$ and \eqref{volume}.

We denote:
$$l=\ds\left({1\over 2}+\ds{1\over 2}\suml_{i=1}^m b_i\right)^{1/n},\quad
\hat l=\ds{1\over 2(n-1)l^{n-1}}\left(1-\suml_{i=1}^m b_i\right),\quad
l_j =\ds{b_j\over l^{n-1}}.$$

Finally we define the domains $B_j$, $j=1,\dots,m$ by the following formula:
\begin{gather*}
B_j=\left\{x\in \mathbb{R}^n: x_1\in \left(-{l\over 2}+(j-1)\hat
l+\suml_{i=1}^{j-1}l_j,\ -{l\over 2}+(j-1)\hat
l+\suml_{i=1}^{j}l_j\right),\ |x_k|<{l\over 2},\
k=2,\dots,n\right\}
\end{gather*}
It is easy to show that these domains satisfy conditions $(b_1)-(b_3)$ and 
\eqref{volume}.
\end{example}

\section{\label{sec2}Proof of Theorem \ref{th1}}

\subsection{\label{ss21}Preliminaries}

We present the proof of Theorem \ref{th1} for the case $n\geq 3$
only. For the case $n=2$ the proof is repeated word-by-word with
some small modifications.

In what follows by $C,C_1...$ we denote generic constants that do
not depend on $\eps$.

Let $B$ be an open domain in $\mathbb{R}^n$. By $\langle u
\rangle_B$ we denote the mean value of the function $v(x)$ over
the domain $B$, i.e.
$$\langle u \rangle_B={1\over |B|}\intl_B u(x)dx.$$ Here by
$|B|$ we denote the volume of the domain $B$.

If $\Sigma\subset \mathbb{R}^n$ is a $(n-1)$-dimensional surface
then the Euclidean metrics in $\mathbb{R}^n$ induces on $\Sigma$
the Riemannian metrics and measure. We denote by $ds$ the density
of this measure. Again by $\langle u\rangle_\Sigma$ we denote the
mean value of the function $u$ over $\Sigma$, i.e $\langle
u\rangle_\Sigma=\ds{1\over |\Sigma|}\intl_{\Sigma}u ds$, where
$|\Sigma|=\intl_\Sigma ds$.

We introduce the following sets: $$Y\e=Y\setminus \cupl_{j=1}^m
S_j\e.$$

By $\A\e$ we denote the Neumann Laplacian in $\eps^{-1}\Omega\e$.
It is clear that
\begin{gather}\label{AA}
\sigma(\mathcal{A}\e)=\eps^{-2}\sigma(\A\e).
\end{gather}
It is more convenient to deal with the operator $\A\e$ since the
external boundary of its period cell is fixed (it coincides with
$\partial Y$).

In view of the periodicity of $\A\e$ the analysis of its spectrum
$\sigma(\A\e)$ reduces to the analysis of the spectrum of the
Laplace operator on $Y\e$ with the Neumann boundary conditions on
$\cupl_{j=1}^m S_j\e$ and quasi-periodic boundary (or
$\theta$-periodic) boundary conditions on $\partial Y$. Namely,
let
$$\mathbb{T}^n=\left\{\theta=(\theta_1,\dots,\theta_n)\in
\mathbb{C}^n:\ |\theta_k|=1,\ \forall k\right\}.$$ For $\theta\in
\mathbb{T}^n$ we introduce the functional space $H_\theta^1(Y\e)$
consisting of functions from $H^1(Y\e)$ that satisfy the following
condition on $\partial{Y}$:
\begin{gather}\label{theta1}
\forall k=\overline{1,n}:\quad u(x+ e_k)=\theta_k u(x)\text{ for
}x=\underset{^{\overset{\qquad\qquad\uparrow}{\qquad\qquad
k\text{-th place}}\qquad }}{(x_1,x_2,\dots,-{1/2},\dots,x_n)},
\end{gather}
where $e_k={(0,0,\dots,1,\dots,0)}$.

By $\eta^{\theta,\eps}$ we denote the sesquilenear form defined by
formula (\ref{form1}) (with $Y\e$ instead of $\Omega$) and the
definitional domain $H_\theta^1(Y\e)$. We define the operator
$\A^{\theta,\eps}$ as the operator acting in $L_{2}(Y\e)$ and
associated with the form $\eta^{\theta,\eps}$, i.e.
\begin{gather*}
(\A^{\theta,\eps} u,v)_{L_2(Y\e)}=
\eta^{\theta,\eps}[u,v],\quad\forall u\in
\mathrm{dom}(\A^{\theta,\eps}),\  \forall v\in
\mathrm{dom}(\eta^{\theta,\eps}).
\end{gather*}
The functions from $\mathrm{dom}(\A^{\theta,\eps})$ satisfy the Neumann boundary 
conditions on $\cupl_{j=1}^m S_j\e$,
condition (\ref{theta1}) on $\partial Y$ and the condition
\begin{gather}\label{theta2}
\forall k=\overline{1,n}:\quad {\partial u\over\partial x_k}(x+
e_k)=\theta_k {\partial u\over\partial x_k}(x)\text{ for
}x={(x_1,x_2,\dots,-{1/2},\dots,x_n)}.
\end{gather}

The operator $\A^{\theta,\eps}$ has purely discrete spectrum. We
denote by
$\left\{\lambda_k^{\theta,\eps}\right\}_{k\in\mathbb{N}}$ the
sequence of eigenvalues of $\A^{\theta,\eps}$ written in the
increasing order and repeated according to their multiplicity.

The Floquet-Bloch theory (see, e.g., \cite{Brown,Kuchment,Reed})  establishes
the following relationship between the spectra of the operators
$\A\e$ and $\A^{\theta,\eps}$:
\begin{gather}\label{repres1}
\sigma(\A\e)=\cupl_{k=1}^\infty L_k\text{, where
}L_k=\cupl_{\theta\in \mathbb{T}^n}
\left\{\lambda_k^{\theta,\eps}\right\}.
\end{gather}
The sets $L_k$ are compact intervals.

Also we need the Laplace operators on $Y\e$ with the Neumann
boundary conditions on $\cupl_{j=1}^m S_j\e$ and either Neumann or
Dirichlet boundary conditions on $\partial Y$. Namely, we denote
by $\eta^{N,\eps}$ ({resp.} $\eta^{D,\eps}$) the sesquilinear form
in $L_2(Y\e)$ defined by formula (\ref{form1}) (with $Y\e$ instead
of $\Omega\e$) and the definitional domain $H^1(Y\e)$ ({resp.}
$\widehat{H}^1_0(Y\e)=\left\{u\in H^1(Y\e):\ u=0\text{ on
}\partial Y\right\}$). Then by $\A^{N,\eps}$ ({resp.}
$\A^{D,\eps}$) we denote the operator associated with the form
$\eta^{N,\eps}$ ({resp.} $\eta^{D,\eps}$), i.e.
\begin{gather*}
(\A^{*,\eps} u,v)_{L_2(Y\e)}= \eta^{*,\eps}[u,v],\quad\forall u\in
\mathrm{dom}(\A^{*,\eps}),\ \forall v\in \mathrm{dom}(\eta^{*,\eps}),
\end{gather*}
where $*$ is $N$ ({resp.} $D$).

The spectra of the operators $\A^{N,\eps}$ and $\A^{D,\eps}$ are
purely discrete. We denote by
$\left\{\lambda_k^{N,\eps}\right\}_{k\in\mathbb{N}}$ ({resp.}
$\left\{\lambda_k^{D,\eps}\right\}_{k\in\mathbb{N}}$) the sequence
of eigenvalues of $\A^{N,\eps}$ ({resp.} $\A^{D,\eps}$) written in
the increasing order and repeated according to their multiplicity.

Using the min-max principle (see, e.g., \cite{Reed}) and the
enclosure $H^1(Y\e) \supset H^1_\theta(Y\e)\supset
\widehat{H}^1_0(Y\e)$ one can easily prove the inequality
\begin{gather}\label{enclosure}
\forall k\in \mathbb{N},\ \forall\theta\in \mathbb{T}^n:\quad
\lambda_k^{N,\eps} \leq\lambda_k^{\theta,\eps}\leq
\lambda_k^{D,\eps}.
\end{gather}

\subsection{\label{ss22}Number-by-number convergence of eigenvalues of the 
Dirichlet, Neumann and
$\theta$-periodic Laplacians}

We denote $$B_{m+1}=Y\setminus \cupl_{j=1}^m \overline{B_j\e}.$$
By $\Delta_{B_j}$, $j=1,\dots,m$ we denote the operator which acts
in $L_2(B_j)$ and is defined by the operation $\Delta$ and the
Neumann boundary conditions on $\partial B_j$. By
$\Delta_{B_{m+1}}^N$ ({resp.} $\Delta_{B_{m+1}}^D$,
$\Delta_{B_{m+1}}^\theta$) we denote the operator which acts in
$L_2({B_{m+1}})$ and is defined by the operation $\Delta$, the
Neumann boundary conditions on $\cup_{j=1}^m\partial B_j$ and the
Neumann ({resp.} Dirichlet, $\theta$-periodic) boundary conditions
on $\partial Y$. Finally, we introduce the operators $\A^N$,
$\A^D$, $\A^\theta$ which act in
$\underset{j=\overline{1,m+1}}\oplus L_2(B_j)$ and are defined by
the following formulae:
\begin{gather*}
\A_N=-\left(\begin{matrix}\Delta_{{B_1}}&0&\dots&0\\
0&\Delta_{{B}_2}&\dots&0\\\vdots&\vdots&\ddots&\vdots\\0&0&\dots&\Delta^N_{{B}_{
m+1}}\end{matrix}\right),\
\A_D=-\left(\begin{matrix}\Delta_{{B_1}}&0&\dots&0\\
0&\Delta_{{B}_2}&\dots&0\\\vdots&\vdots&\ddots&\vdots\\0&0&\dots&\Delta^D_{{B}_{
m+1}}\end{matrix}\right),\
\A_\theta=-\left(\begin{matrix}\Delta_{{B_1}}&0&\dots&0\\
0&\Delta_{{B}_2}&\dots&0\\\vdots&\vdots&\ddots&\vdots\\0&0&\dots&\Delta^\theta_{
{B}_{m+1}}\end{matrix}\right),
\end{gather*}
We denote by $\left\{\lambda_k^{N}\right\}_{k\in\mathbb{N}}$
({resp.} $\left\{\lambda_k^{D}\right\}_{k\in\mathbb{N}}$,
$\left\{\lambda_k^{\theta}\right\}_{k\in\mathbb{N}}$) the sequence
of eigenvalues of $\A^{N}$ ({resp. }$\A^{D}$, $\A^{\theta}$)
written in the increasing order and repeated according to their
multiplicity. It is clear that
\begin{gather}\label{lambdaN}
\lambda_1^{N}=\lambda_2^{N}=\dots =\lambda^N_{m+1}=0,\
\lambda_{m+2}^{N}>0,\\\label{lambdaD}
\lambda_1^{D}=\lambda_2^D=\dots =\lambda_m^D=0,\
\lambda_{m+1}^{D}>0,\\\label{lambdaT1}
\lambda_1^{\theta}=\lambda^{\theta}_2=\dots
=\lambda_{m+1}^\theta=0,\ \lambda_{m+2}^{\theta}>0\text{ if
}\theta=(1,1,\dots,1),\\\label{lambdaT2}
\lambda_1^{\theta}=\lambda_2^{\theta}=\dots
=\lambda_m^{\theta}=0,\ \lambda_{m+1}^{\theta}>0\text{ if
}\theta\not=(1,1,\dots,1).
\end{gather}

\begin{theorem}\label{th3}
For each $k\in \mathbb{N}$ one has
\begin{gather}
\label{conv1}\liml_{\eps\to
0}\lambda_k^{N,\eps}=\lambda^N_k,\\\label{conv2}\liml_{\eps\to
0}\lambda_k^{D,\eps}=\lambda^D_k,\\\label{conv3}\liml_{\eps\to
0}\lambda_k^{\theta,\eps}=\lambda^\theta_k\quad
(\forall\theta\in\mathbb{T}^n).
\end{gather}
\end{theorem}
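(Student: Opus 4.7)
The plan is to sandwich each $\lambda_k^{*,\eps}$ ($*\in\{N,D,\theta\}$) between quantities that converge to $\lambda_k^*$, by applying the min-max principle simultaneously to $\A^{*,\eps}$ and to the decoupled limit operator $\A^*$. Geometrically, $Y\e$ is the cube $Y$ with the codimension-one screens $S_j\e$ removed, so a function in $H^1(Y\e)$ may jump across each screen but must remain continuous through the small hole $D_j\e$. Since the capacity of $D_j\e$ tends to $0$ as $\eps\to 0$ (of order $\eps^2$ in the scaling chosen in the introduction), this remaining linkage between the interior and exterior of each $B_j$ disappears in the limit, and functions on $Y\e$ behave, in the limit, like elements of $\bigoplus_{j=1}^{m+1}H^1(B_j)$, matching $\mathrm{dom}(\eta^*)$.

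For the upper bound $\limsup_{\eps\to 0}\lambda_k^{*,\eps}\le\lambda_k^*$, I would fix orthonormal eigenfunctions $v_1,\dots,v_k$ of $\A^*$ (each $v_l$ being a tuple of functions on $B_1,\dots,B_{m+1}$) and construct test functions $\widetilde v_l\e$ in the appropriate space over $Y\e$ as follows: on each $B_j$, outside a small ball $B_{\delta}(\tilde x^j)$ around the hole, set $\widetilde v_l\e$ equal to the corresponding component of $v_l$; inside $B_{\delta}(\tilde x^j)$, interpolate between the two sides of the screen using the capacity potential of $D_j\e$. The transition region contributes a Dirichlet integral bounded by a constant multiple of $\mathrm{cap}(D_j\e)\to 0$, while the $L_2$-error tends to zero provided $\delta=\delta(\eps)\to 0$ slowly enough. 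After a Gram--Schmidt correction, the $\widetilde v_l\e$ are almost orthonormal with Dirichlet energies close to $\lambda_l^*$, and min-max yields the bound. The construction automatically respects the boundary conditions on $\partial Y$ imposed by $*$, since the modification is supported near the interior holes, away from $\partial Y$.

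For the lower bound $\lambda_k^*\le\liminf_{\eps\to 0}\lambda_k^{*,\eps}$, I would take orthonormal eigenfunctions $u_1\e,\dots,u_k\e$ of $\A^{*,\eps}$; their $H^1(Y\e)$-norms are bounded by $1+\lambda_k^{*,\eps}$, which is in turn bounded by the upper bound just proved. Their restrictions to each fixed domain $B_j$ are thus $H^1$-bounded, so along a subsequence they converge weakly in $H^1$ and strongly in $L_2$ to limits $u_l=(u_{l,1},\dots,u_{l,m+1})\in\bigoplus_j H^1(B_j)$. The $L_2$-orthonormality is preserved; weak lower semicontinuity of the Dirichlet form controls $\suml_{j=1}^{m+1}\intl_{B_j}|\nabla u_{l,j}|^2 dx$ by $\liminf_{\eps\to 0}\lambda_l^{*,\eps}$; and the boundary conditions on $\partial Y$ pass to the limit since $\partial Y$ is disjoint from the degenerating holes. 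Applying the min-max principle for $\A^*$ to the $k$-dimensional space spanned by the $u_l$ yields the desired inequality.

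The main obstacle is the upper-bound construction: the glued test function must simultaneously lie in the correct function space (respecting the Dirichlet or $\theta$-periodic condition on $\partial Y$), approximate the limit in $L_2$, and produce negligible transition energy near each hole. The correct building block is the explicit capacity potential of the disc $D_j\e$ in $\mathbb{R}^n$, whose Dirichlet energy is precisely $\mathrm{cap}(D_j\e)$; the choice of the radius $d_j\e$ in the introduction was made exactly so that $\mathrm{cap}(D_j\e)$ is of order $\eps^2$, the scale required to produce finite nontrivial limits. Once this ingredient is in place, the remaining estimates are routine, and the lower bound is comparatively soft.
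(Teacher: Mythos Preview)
Your approach is sound and is the standard two--sided variational argument for spectral convergence under small--capacity perturbations. The paper itself does not prove Theorem~\ref{th3} at all: it simply invokes the case $m=1$ from \cite{Khrab3} and remarks that the general case is similar. Your outline therefore supplies what the paper omits.

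One simplification is available for the lower bound. The form domain of $\A^{*,\eps}$ embeds into that of $\A^{*}$: any $u$ in $H^1(Y\e)$ (respectively $\widehat H^1_0(Y\e)$, $H^1_\theta(Y\e)$) restricts to a tuple in $\bigoplus_{j=1}^{m+1}H^1(B_j)$ with the matching condition on $\partial Y$, and the two Dirichlet forms agree because $\cup_j S_j\e$ has Lebesgue measure zero. The min--max principle then gives $\lambda_k^{*}\le\lambda_k^{*,\eps}$ for \emph{every} $\eps>0$, with no compactness step needed. Your weak--limit scheme is correct but is only required if one also wants convergence of eigenfunctions.

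For the upper bound your construction is correct. Two small remarks: first, the eigenfunctions of $\A^{*}$ are smooth near each $\tilde x^{j}$ (the boundary of $B_j$ is flat there, so one may reflect and use interior elliptic regularity), which justifies treating them as bounded in the transition region. Second, the cut--off radius $\delta$ need not tend to zero: the scaled capacity potential $\psi\big(\mathcal{R}_j^{-1}(x-\tilde x^{j})/d_j\e\big)$, truncated by a fixed cut--off $\varphi(|x-\tilde x^{j}|/r)$, already has Dirichlet energy of order $\eps^{2}$ and tends to $0$ in $L_2$ by dominated convergence. This is exactly the building block $U\e(x,s)$ that the paper introduces later in \eqref{Nvb} for the proof of Theorem~\ref{Nth}.
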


For the case $m=1$ Theorem \ref{th3} was proved in \cite{Khrab3}.
For $m>1$ the proof is similar.

\subsection{\label{ss23}Asymptotics of the first $m$ non-zero eigenvalues of the 
Neumann Laplacian}

We get more complete information about the behaviour of $\lambda_k^{N,\eps}$ , 
$k=2,\dots,m+1$ (it is clear that $\lambda_1^{N,\eps}=0$).

\begin{theorem}\label{Nth}
For $k=2,\dots,m+1$ one has
\begin{gather}\label{Nthmain}
\liml_{\eps\to 0}\eps^{-2}{\lambda}_k^{N,\eps}=\mu_{k-1}.
\end{gather}
\end{theorem}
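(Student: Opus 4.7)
The plan is to set $\lambda_k^{N,\eps}=\eps^2\Lambda_k^\eps$ and to identify a finite-dimensional effective eigenvalue problem on $\mathbb{R}^{m+1}$ whose non-zero eigenvalues turn out to be $\mu_1,\dots,\mu_m$. By Theorem \ref{th3} and \eqref{lambdaN}, the first $m+1$ eigenvalues $\lambda_1^{N,\eps},\dots,\lambda_{m+1}^{N,\eps}$ all tend to $0$ while $\lambda_{m+2}^{N,\eps}$ stays bounded away from $0$, so Theorem \ref{Nth} should be read as the next-order description of those $m+1$ collapsing eigenvalues. \textbf{First}, I would reduce an $L_2$-normalized eigenfunction $u_k^\eps$ ($k\leq m+1$) to its $m+1$ piecewise means: since $\intl_{Y\e}|\nabla u_k^\eps|^2\,dx=\lambda_k^{N,\eps}\to 0$, the Poincar\'e inequality on each fixed Lipschitz domain $B_j$ (Neumann trace from the $B_j$-side of $\partial B_j$) and on $B_{m+1}$ (Neumann data on $\partial Y$ and on the $B_{m+1}$-side of each $\partial B_j$) gives $\|u_k^\eps-\langle u_k^\eps\rangle_{B_j}\|_{L_2(B_j)}\to 0$ for $j=1,\dots,m+1$. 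Passing to a subsequence, $\langle u_k^\eps\rangle_{B_j}\to\alpha_j^{(k)}$ and $u_k^\eps\to\suml_{j=1}^{m+1}\alpha_j^{(k)}\chi_{B_j}$ in $L_2(Y)$, with $L_2$-normalization and orthogonality across $k$ passing to the limit as $\suml_{j=1}^{m+1} b_j\alpha_j^{(k)}\overline{\alpha_j^{(l)}}=\delta_{kl}$ (with $b_{m+1}:=1-\suml_{i=1}^m b_i$).

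\textbf{Second}, the effective linear system for the $\alpha_j^{(k)}$ would come out of capacity-shaped test functions. For each $j=1,\dots,m$ I would construct $\varphi_j^\eps\in H^1(Y\e)$ which equals $1$ on $B_j$ outside a small neighbourhood of $D_j\e$, equals $0$ on $B_{m+1}$ outside that neighbourhood, and, after the blow-up $y=(x-\tilde x^j)/\eps^{2/(n-2)}$, coincides with the capacity minimizer of $T$ from the definition of $\kappa$. A direct computation in the blown-up variable would give
\begin{gather*}
\intl_{Y\e}|\nabla\varphi_j^\eps|^2\,dx=\tfrac14\kappa d_j^{n-2}\eps^2+o(\eps^2),\qquad\intl_{Y\e}\nabla\varphi_j^\eps\cdot\nabla\overline{\varphi_i^\eps}\,dx=o(\eps^2)\ (i\neq j),
\end{gather*}
the factor $\tfrac14$ reflecting the two-sided symmetry of the transition layer across the membrane. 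Testing the identity $\intl_{Y\e}\nabla u_k^\eps\cdot\nabla\overline{\varphi_j^\eps}\,dx=\lambda_k^{N,\eps}\intl_{Y\e}u_k^\eps\overline{\varphi_j^\eps}\,dx$, dividing by $\eps^2$ and passing to the limit would produce
\begin{gather*}
\tfrac14\kappa d_j^{n-2}\bigl(\alpha_j^{(k)}-\alpha_{m+1}^{(k)}\bigr)=\mu_{k-1}\,b_j\,\alpha_j^{(k)},\qquad j=1,\dots,m,
\end{gather*}
together with the conservation identity coming from $\varphi_{m+1}^\eps:=1-\suml_{j=1}^m\varphi_j^\eps$. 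Recalling $\sigma_j=\kappa d_j^{n-2}/(4b_j)$ and eliminating $\alpha_{m+1}^{(k)}$ reproduces exactly equation \eqref{mu_eq} with $\lambda=\mu_{k-1}$, so every subsequential limit of $\eps^{-2}\lambda_k^{N,\eps}$ lies in the finite set $\{0,\mu_1,\dots,\mu_m\}$.

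\textbf{Finally}, setting $\varphi_0^\eps\equiv 1$ and applying min-max on the $(m+1)$-dimensional subspace spanned by $\{\varphi_j^\eps\}_{j=0}^m$ would yield the matching upper bound $\eps^{-2}\lambda_k^{N,\eps}\leq\mu_{k-1}+o(1)$ for $k=2,\dots,m+1$; combined with the dichotomy from the previous step and the orthogonality from the first step (which excludes a second accumulation point at $0$), this identifies the limit as $\mu_{k-1}$. \textbf{The hard part} is the capacity computation of the second step: proving $\intl_{Y\e}|\nabla\varphi_j^\eps|^2\,dx\sim\tfrac14\kappa d_j^{n-2}\eps^2$ and the accompanying flux identity at each hole requires careful matched asymptotics around $D_j\e$, and it is precisely here that the peculiar radius $d_j\e=d_j\eps^{2/(n-2)}$, tuned to balance the capacity of the hole against the bulk volumes $b_j$, plays its decisive role.
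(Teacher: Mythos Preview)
Your outline is essentially correct and leads to the result, but it follows a genuinely different route from the paper. The paper never tests the eigenfunction identity against capacity profiles to derive an effective system. Instead it builds an explicit approximation $\mathbf{u}_k^\eps=U^\eps(\cdot,\mathbf{s}^k)-\mathbf{w}_k^\eps$ (your $\varphi_j^\eps$ is essentially $U^\eps(\cdot,e_j)$), proves the energy estimate $\eps^{-2}\|\nabla(u_k^\eps-\mathbf{u}_k^\eps)\|_{L_2(Y^\eps)}^2\to 0$, and concludes $\eps^{-2}\lambda_k^{N,\eps}\to F(\mathbf{s}^k)$ with $F(s)=\suml_{j}\sigma_jb_j(s_{m+1}-s_j)^2$. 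It then characterises the $\mathbf{s}^k$ as successive constrained minimisers of $F$, recasts this as an ellipsoid--sphere tangency problem, and finally identifies the tangency radii with the roots of \eqref{mu_eq} via a nontrivial determinantal identity (Lemma~\ref{Nlm5}). Your approach bypasses both the $H^1$-approximation estimate and all of that linear algebra: once the effective $(m{+}1)\times(m{+}1)$ system is derived, eliminating $\alpha_{m+1}^{(k)}$ reproduces \eqref{mu_eq} directly, and simplicity of the effective eigenvalues together with orthogonality and the min--max upper bound pins down $\Lambda_k=\mu_{k-1}$.

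There is, however, one step you must not leave implicit. When you ``divide by $\eps^2$ and pass to the limit'' in $\intl_{Y^\eps}\nabla u_k^\eps\cdot\nabla\overline{\varphi_j^\eps}\,dx$, the information available so far --- $\|\nabla u_k^\eps\|=O(\eps)$, $\|\nabla\varphi_j^\eps\|=O(\eps)$, and $L_2$-convergence of $u_k^\eps$ to piecewise constants --- does \emph{not} suffice, because both gradients concentrate in the shrinking layer around $D_j^\eps$ where you have no control on $u_k^\eps$. The way out (presumably what your ``flux identity'' intends) is to integrate by parts onto $\varphi_j^\eps$: since $\varphi_j^\eps\in\mathrm{dom}(A^{N,\eps})$ one has $\intl_{Y^\eps}\nabla u_k^\eps\cdot\nabla\varphi_j^\eps=-\intl_{Y^\eps}u_k^\eps\,\Delta\varphi_j^\eps$, and now $\Delta\varphi_j^\eps$ is supported only in the \emph{fixed} annulus where the outer cut-off acts, has $\|\Delta\varphi_j^\eps\|_{L_2}=O(\eps^2)$, and there $u_k^\eps$ may legitimately be replaced by its piecewise constant limit. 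Computing $\intl_{B_j}\Delta\varphi_j^\eps$ and $\intl_{B_{m+1}}\Delta\varphi_j^\eps$ by the divergence theorem then reduces to the flux of the rescaled capacity potential $\psi$ through the disc and yields $\mp\tfrac14\kappa d_j^{n-2}\eps^2$, giving exactly your claimed limit. If you try to pass to the limit directly in the gradient pairing, you are forced to prove that $u_k^\eps$ matches the capacity profile near the hole to order $o(\eps)$ in $H^1$ --- which is precisely the paper's approximation estimate and is considerably harder.
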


\begin{proof}

Let $u_k\e$, $k\in\mathbb{N}$ be the eigenfunctions corresponding
to $\lambda_k^{N,\eps}$ and satisfying the conditions
\begin{gather}\label{Nort}
(u_k\e,u_l\e)_{L_2(Y\e)}=\delta_{kl},\\{ u\e_k\text{ are real
functions}}\notag
\end{gather}
(here $\delta_{kl}$ is the Kronecker delta). It is clear that $u_1\e=\pm 1$.

By the min-max principle (see, e.g, \cite{Reed})  we get
\begin{gather}\label{Nminimax1}
\forall k\in \mathbb{N}:\ \lambda_k^{N,\eps}=\minl_{u\in
H(u_1\e,\dots,u\e_{k-1}) }{\|\nabla u\|^2_{L_2(Y\e)}\over
\|u\|^2_{L_2(Y\e)}},
\end{gather}
where
\begin{gather}\label{hk}
H(u\e_1,\dots,u\e_{k-1}) =\{u\in H^1(Y\e):\ (u,u_l\e)_{L_2(Y\e)},\  
l=1,\dots,k-1\}.
\end{gather}

Using the Cauchy inequality we get the estimate
$$
\suml_{j=1}^{m+1}\left(\langle u_{k}\e\rangle_{B_j\e}\right)^2\leq
C
$$
and therefore there exist a subsequence (for convenience still denoted by 
$\eps$) and numbers $\mathbf{s}^k_{j}\in\mathbb{R}$, $k\in \mathbb{N}$, 
$j=1,\dots,m+1$ such that
\begin{gather}\label{Nbetadef}
\liml_{\eps\to 0}\langle u_{k}\e\rangle_{B_j}=\mathbf{s}^k_{j}.
\end{gather}
We denote 
$\mathbf{s}^k=(\mathbf{s}^k_{1},\dots,\mathbf{s}^k_{m+1})\in\mathbb{R}^{m+1}$.

During the proof we will use the function $F:\mathbb{R}^{m+1}\to
[0,\infty)$ defined by the formula (below
$s=(s_1,\dots,s_{m+1})\in\mathbb{R}^{m+1}$)
\begin{gather}\label{F}
F({s})=\suml_{j=1}^m\sigma_{j}b_j(s_{m+1}-s_{j})^2.
\end{gather}

Also we will use the function $U\e:Y\e\times \mathbb{R}^{m+1}\to\mathbb{R}$ 
which is defined by the following formula (below $x\in Y\e$, 
$s=(s_1,\dots,s_{m+1})\in \mathbb{R}^{m+1}$):
\begin{gather}\label{Nvb}
U\e(x,s)=
\begin{cases}\ds
s_j-{s_j-s_{m+1}\over
2}\psi\left(\mathcal{R}^{-1}_j\left({x-\tilde x^j\over
d_j\e}\right)\right)\varphi\left({|x-\tilde x^j|\over r}\right)&\text{if }x\in 
B_j,\ j=1,\dots m,\\
\ds s_{m+1}+\suml_{i=1}^m{s_i-s_{m+1}\over 
2}\psi\left(\mathcal{R}_j^{-1}\left(x-\tilde
x^i\over d_i\e\right)\right)\varphi\left({|x-\tilde x^i|\over
r}\right)&\text{if }x\in B_{m+1}.
\end{cases}
\end{gather}
Here $\varphi:\mathbb{R}\to \mathbb{R}$ is a twice-continuously
differentiable function satisfying
\begin{gather}\label{varphi}
\varphi(\rho)=1\text{ as }\rho\leq 1/4\text{ and
}\varphi(\rho)=0\text{ as }\rho\geq 1/2,
\end{gather}
$r$ is an arbitrary positive constant satisfying the conditions
\begin{gather}\label{r}
r<\minl_{j=\overline{1,m}}r_j;\quad B_r(\tilde x^j)\cap
\left(\partial Y\cupl_{i\not=j} B_i\right)=\varnothing;
\end{gather}
$\mathcal{R}_j:\mathbb{R}^{n}\to\mathbb{R}^{n}$ ($j=1,\dots,m$) is
the operator of rotation mapping the disc $T$ onto a set which is parallel
to a flat part of $\partial B_j$ containing $\tilde x^i$; $\psi$
is a solution of the following problem:
\begin{gather}\label{bvp-}
\Delta\psi=0\text{ in }\mathbb{R}^n\setminus
\overline{T},\\\label{bvp} \psi=1\text{ in }\partial
T,\\\label{bvp+} \psi(x)=o(1)\text{ as }|x|\to\infty
\end{gather}
(recall that $T=\left\{x\in \mathbb{R}^n:\ |x|<1,\ x_n=0\right\}$,
obviously $\partial T=\overline{T}$).  It is well-known that problem 
\eqref{bvp-}-\eqref{bvp+} has the unique solution $\psi(x)$ satisfying 
$\intl_{\mathbb{R}^n\setminus T}|\nabla\psi|^2 dx<\infty$.

Using a standard regularity theory it is easy to prove that
$\psi(x)$ has the following properties:
\begin{gather}\label{psi1}
\psi\in C^\infty(\mathbb{R}^n\setminus\overline{T}),\\\label{psi2}
\psi(x_1,x_2,\dots,x_{n-1},x_n)=\psi(x_1,x_2,\dots,x_{n-1},-x_n).
\end{gather}
These properties imply:
\begin{gather}\label{psi4}
{\partial\psi\over\partial x_n}=0\text{ in } \left\{x\in
\mathbb{R}^n:\ x_n=0\right\}\setminus \overline T,\\\label{psi5}
\left.{\partial\psi\over\partial
x_n}\right|_{x_n=+0}+\left.{\partial\psi\over\partial
x_n}\right|_{x_n=-0}=0\text{ in } T.
\end{gather}
Furthermore the function $\psi(x)$ satisfies the estimate (see,
e.g, \cite[Lemma 2.4]{March}):
\begin{gather}\label{cap_est}
|D^\a\psi(x)|\leq {C|x|^{2-n-\a}}\ \text{ for }|x|>2,\ |\a|=0,1,2.
\end{gather}
And finally one has the following equality:
\begin{gather}
\label{psi3}\kappa=\ds\intl_{\mathbb{R}^n\setminus
T}|\nabla\psi|^2 dx.
\end{gather}

It is easy to see that for $\eps$ small enough (namely, when
$\maxl_j d_j\e< r/4$) and for any $s\in\mathbb{R}^{m+1}$
$U\e(\cdot,s)\in \mathrm{dom}(\A^{N,\eps})$ due to
(\ref{varphi}), (\ref{r}), (\ref{bvp}), (\ref{psi1}),
(\ref{psi4}), (\ref{psi5}).

Let us establish some properties of  $U\e(x,s)$ for fixed
$s\in\mathbb{R}^{m+1}$. Using (\ref{cap_est}),
(\ref{psi3})  we obtain:
\begin{gather}\label{Nvbest1}
\|\nabla U\e(\cdot,s)\|^2_{L_2(Y\e)}\sim {1\over 4}\kappa
d^{n-2}\eps^2\suml_{j=1}^{m}(s_{m+1}-s_j)^2=\eps^2 F(s)\quad
(\eps\to 0), \\
\label{Nvbest2}\|\Delta U\e(\cdot,s)\|^2_{L_2(Y\e)}\leq C\eps^4.
\end{gather}
Since $\big[U\e(\cdot,s)-s_j\big]_{int}=0$ on $\partial
B\setminus\left\{x:\ |x-\tilde x_j|\leq r/2\right\}$,
$j=1,\dots,m$  (here $[\dots]_{int}$ denotes the value of the
function when we approach $\partial B_j$ from inside of $B_j$) and
$U\e(\cdot,s)-s_{m+1}=0$ on $\partial Y$ we have the following
Friedrichs inequalities
\begin{gather*}
\|U\e(\cdot,s)-s_j\|^2_{L_2(B_j)}\leq C\|\nabla
U\e(\cdot,s)\|^2_{L_2(B_j)}\leq C_1\eps^2,\ j=1,\dots,m+1
\end{gather*}
and therefore
\begin{gather}\label{Nvbest3}
\forall j=1,\dots,m+1:\ U\e(\cdot,s)\underset{\eps\to 0}\to
s_j\text{ strongly in }L_2(B_j).
\end{gather}

By $(\cdot,\cdot)_B$ we denote the following scalar product in
$\mathbb{R}^{m+1}$:
\begin{gather*}
(s,t)_B=\suml_{j=1}^{m+1}s_j t_j|B_j|,\ s,t\in \mathbb{R}^{m+1}.
\end{gather*}
It follows from \eqref{Nvbest3} that
\begin{gather}\label{Nvbest3+}
\|U\e(\cdot,s)\|^2_{L_2(Y\e)}\sim (s,s)_B\text{ as }\eps\to 0.
\end{gather}

\begin{lemma}\label{Nlm1}
One has for $k=1,\dots,m+1$:
\begin{gather}\label{Nlm1main}
\liml_{\eps\to 0}\lambda_k^{N,\eps}\leq C\eps^2.
\end{gather}
\end{lemma}

\begin{proof}
We prove this lemma by induction. For $k=1$ \eqref{Nlm1main} is
obvious (namely, $\lambda_1^{N,\eps}=0$). Now, let
\begin{gather}\label{Nlm1main+}
\liml_{\eps\to 0}\lambda_k^{N,\eps}\leq C\eps^2\text{ for }k\leq k'-1
\end{gather}
and let us prove \eqref{Nlm1main+} for $k=k'$.

One has the following Poincar\'{e} inequalities:
\begin{gather*}
\|u_k\e-\langle u_{k}\e\rangle_{B_j}\|^2_{L_2(B_j)}\leq C\|\nabla
u_k\e\|^2_{L_2(Y\e)}= C\lambda_k^{N,\eps},\ j=1,\dots,m+1,\ k=1,2,3\dots
\end{gather*}
and therefore due to \eqref{Nbetadef}, \eqref{Nlm1main+} one has
\begin{gather}\label{Nconv1}
\forall l=1,\dots,k'-1,\ \forall j=1,\dots,m+1:\
u_{l}\e\underset{\eps\to 0}\to \mathbf{s}^{l}_j\text{ strongly in
}L_2(B_j).
\end{gather}

Now, let $\hat s\in\mathbb{R}^{m+1}\setminus\{0\}$ be an arbitrary vector 
satisfying:
\begin{gather}\label{hats}
(\hat{s},\mathbf{s}^l)_B=0,\ l=1,\dots,k'-1.
\end{gather}
The choice of such a vector is always possible whenever $k'\leq m+1$. We denote
$$\hat u\e(x)=U\e(x,\hat s)-w\e(x,\hat s)$$
where
\begin{gather*}
w\e(x,\hat s)=\suml_{l=1}^{k'-1}(U(\cdot,\hat
s),u_l\e(\cdot))_{L_2(Y\e)}u_l\e(x).
\end{gather*}
In view of \eqref{Nvbest3}, \eqref{Nconv1}, \eqref{hats} we obtain
\begin{gather}\label{ort}
(U\e(\cdot, \hat s),u_l\e(\cdot))_{L_2(Y\e)}\underset{\eps\to
0}\to (\hat{s},\mathbf{s}^l)_B=0.
\end{gather}
Using  \eqref{Nlm1main+} and \eqref{ort} one has
\begin{gather}\label{w1}
\|w\e(\cdot,\hat s)\|_{L_2(Y\e)}^2=\suml_{l=1}^{k'-1}(U\e(\cdot,\hat
s),u_l\e(\cdot))^2_{L_2(Y\e)}\underset{\eps\to 0}\to
0,\\\label{w2} \eps^{-2}   \|\nabla w\e(\cdot,\hat
s)\|_{L_2(Y\e)}^2=\eps^{-2}\suml_{l=1}^{k'-1}\lambda_l^{N,\eps}(U\e(\cdot,\hat
s),u_l\e(\cdot))^2_{L_2(Y\e)}\leq
C\suml_{l=1}^{k'-1}(U\e(\cdot,\hat
s),u_l\e(\cdot))^2_{L_2(Y\e)}\underset{\eps\to 0}\to 0.
\end{gather}

Obviously $\hat u\e\in H(u_1\e,\dots,u_{k'-1}\e)$. Then using
\eqref{Nminimax1}, \eqref{Nvbest1},
\eqref{Nvbest3+}, \eqref{w1}, \eqref{w2} we obtain
\begin{gather*}
\lambda_{k'}^{N,\eps}\leq {\|\nabla \hat u\e\|^2_{L_2(Y\e)}\over
\| \hat u\e\|^2_{L_2(Y\e)}}\sim {\|\nabla {U}\e(\cdot,\hat
s)\|^2_{L_2(Y\e)}\over  \| {U}\e(\cdot,\hat s)\|^2_{L_2(Y\e)}}\sim
{F(\hat s)\eps^2\over (\hat s,\hat s)_B}\leq C\eps^2.
\end{gather*}
The lemma is proved.

\end{proof}

\begin{lemma}\label{Nlm2}
One has for $k=1,\dots,m+1$:
\begin{gather}\label{Nlm2main}
\liml_{\eps\to 0}\eps^{-2}\lambda_k^{N,\eps}=F(\mathbf{s}^k).
\end{gather}
\end{lemma}

\begin{proof}

Using the \textcolor{black}{Poincar\'{e}} inequality and Lemma \ref{Nlm1}  we 
obtain the following estimates:
\begin{gather*}
\|u_k\e-\langle u_{k}\e\rangle_{B_j}\|^2_{L_2(B_j)}\leq C\eps^2,\ j,k=1,\dots, 
m+1
\end{gather*}
and therefore in view of \eqref{Nbetadef}
\begin{gather}\label{Nconv2}
\forall k=1,\dots,m+1,\ \forall j=1,\dots,m+1:\
u_{k}\e(\cdot,s)\underset{\eps\to 0}\to \mathbf{s}^{k}_j\text{
strongly in }L_2(B_j).
\end{gather}
Using \eqref{Nconv2} we get from \eqref{Nort}
\begin{gather}\label{Nconv3}
(\mathbf{s}^k,\mathbf{s}^l)=\delta_{kl}.
\end{gather}

We construct an approximation $\mathbf{u}_k^\eps\in H(u_1\e,\dots,u_{k-1}\e)$ 
for the
eigenfunction $u_k\e$ by the formula
\begin{gather}\label{uappr}
\mathbf{u}_k\e(x)=U\e(x,\mathbf{s}^k)-\mathbf{w}\e_k(x),
\end{gather}
where
$$\mathbf{w}_k\e(x)=
\suml_{l=1}^{k-1}({U}\e(\cdot,\mathbf{s}^k),u\e_l(\cdot))_{L_2(Y\e)}u_l\e(x).$$

Using \eqref{Nvbest1},  \eqref{Nvbest2} \eqref{Nvbest3+} and \eqref{Nconv3}  we 
obtain
\begin{gather}\label{Uprop}
\|\nabla U\e(\cdot,\mathbf{s}^k)\|^2_{L_2(Y\e)}\sim\eps^2
F(\mathbf{s}^k),\quad \|U\e(\cdot,\mathbf{s}^k)\|^2_{L_2(Y\e)}\sim
(\mathbf{s}^k,\mathbf{s}^k)_B=1,\quad \|\Delta
U\e(\cdot,\mathbf{s}^k)\|^2_{L_2(Y\e)}\leq C\eps^4.
\end{gather}

The functions $\mathbf{w}\e_k(x)$ brings vanishingly small
contribution to $\mathbf{u}_k\e$. Namely, using \eqref{Nvbest3},
\eqref{Nlm1main}, \eqref{Nconv2}, \eqref{Nconv3} we get
\begin{multline}\label{wprop}
\|\mathbf{w}_k\e\|^2_{L_2(Y\e)}+\eps^{-2}\|\nabla\mathbf{w}_k\e\|^2_{L_2(Y\e)}
+\eps^{-4}\|\Delta\mathbf{w}_k\e\|^2_{L_2(Y\e)}\leq\\\leq
C\suml_{l=1}^{k-1}(U(\cdot,{
\mathbf{s}}^k),u_l\e(\cdot))_{L_2(Y\e)}^2\underset{\eps\to
0}\to
C\suml_{l=1}^{k-1}(\mathbf{s}^k,\mathbf{s}^l)^2=0.
\end{multline}

Now let us estimate the difference
\begin{gather}\label{Ndifference}
\delta_k\e=u_k\e-\mathbf{u}_k\e.
\end{gather}
Taking into account \eqref{Nvbest3}, \eqref{Nconv2}, \eqref{wprop} we get
\begin{gather}\label{Ndelta1}
\|\delta_k\e\|^2_{L_2(Y\e)}\leq
3\left(\suml_{j=1}^{m+1}\|u_k\e-\mathbf{s}^k_j\|^2_{L_2(B_j\e)}+\suml_{j=1}^{m+1
}\|\mathbf{s}^k_j-{U}\e(\cdot,\mathbf{s}^k)\|^2_{L_2(B_j\e)}+\|\mathbf{w}
_k\e\|^2_{L_2(Y\e)}\right)\underset{\eps\to
0}\to 0.
\end{gather}
Since $\mathbf{u}_k\e\in H(u_1\e,\dots,u_{k-1}\e)$ we get
\begin{gather}\label{Nminimax}
\|\nabla u_k\e\|^2_{L_2(Y\e)}\leq {\|\nabla
\mathbf{u}_k\e\|^2_{L_2(Y\e)}\over \|
\mathbf{u}_k\e\|^2_{L_2(Y\e)}}.
\end{gather}
Plugging \eqref{Ndifference}  into (\ref{Nminimax}) and
integrating by parts we obtain
\begin{gather*}\label{Ndeltaprem}
\|\nabla \delta_k\e\|^2_{L_2(Y\e)}\leq 2|(\Delta
\mathbf{u}_k\e,\delta_k\e)_{L_2(Y\e)}|+\|\nabla
\mathbf{u}_k\e\|^2_{L_2(Y\e)}\left(\|\mathbf{u}_k\e\|_{L_{2}
(Y\e)}^{-2}-1\right).
\end{gather*}
and then in view of \eqref{Uprop}, \eqref{wprop}, \eqref{Ndelta1} we conclude
that
\begin{gather}\label{Ndelta2}
\liml_{\eps\to 0}\eps^{-2}\|\nabla\delta_k\e\|^2_{L_2(Y\e)}=0.
\end{gather}

Finally using \eqref{Uprop}, \eqref{wprop}, \eqref{Ndelta2} we obtain
\begin{gather}\label{nlast}
\eps^{-2}\lambda^{N,\eps}_{k}= \eps^{-2}{\|\nabla
{u}_k\e\|^2_{L_2(Y\e)}}\sim \eps^{-2}{\|\nabla
\mathbf{u}_k\e\|^2_{L_2(Y\e)}}\sim \eps^{-2}{\|\nabla
{U}\e(\cdot,\mathbf{s}^k)\|^2_{L_2(Y\e)}}\sim F(\mathbf{s}^k) \quad (\eps\to 0).
\end{gather}
The lemma is proved.

\end{proof}

\begin{lemma}\label{Nlm3}

The vectors $\mathbf{s}^{k}$, $k=1,\dots,m+1$ satisfy the
following inequalities:
\begin{gather*}
F(\mathbf{s}^{k})\leq F(s)\text{ for any }s\in
\mathbf{H}({\mathbf{s}}^1,\dots,{\mathbf{s}}^{k-1}),
\end{gather*}
where
$$\mathbf{H}({\mathbf{s}}^1,\dots,{\mathbf{s}}^{k-1})=\{s\in\mathbb{R}^{m+1}:\ 
(s,s)_B=1,\ (s,{\mathbf{s}}^l)_B=0,\ l=\overline{1,k-1}\}.$$

\end{lemma}

\begin{proof}
Let $\tilde{\mathbf{s}}^{k}\in \mathbf{H}(\mathbf{s}^1,\dots,\mathbf{s}^{k-1})$ 
be an arbitrary vector satisfying
$$F(\tilde{\mathbf{s}}^{k})\leq F(s)\text{ for any }s\in 
\mathbf{H}(\mathbf{s}^1,\dots,\mathbf{s}^{k-1})$$
(i.e. $\tilde{\mathbf{s}}^{k}$ is a minimizer of $F(s)$ on 
$\mathbf{H}({\mathbf{s}}^1,\dots,{\mathbf{s}}^{k-1})$). Since 
${\mathbf{s}}^{k}\in \mathbf{H}({\mathbf{s}}^1,\dots,{\mathbf{s}}^{k-1})$ then
\begin{gather}\label{ff1}
F(\tilde{\mathbf{s}}^{k})\leq F({\mathbf{s}}^{k}).
\end{gather}

Using the min-max principle we get the inequality
\begin{gather}\label{ff3}
\eps^{-2}\lambda_k^{N,\eps}\leq {\eps^{-2}\|\nabla
{U}\e(\cdot,\tilde{\mathbf{s}}^{k}) -\nabla
\tilde{w}_k\e\|^2_{L_2(Y\e)}\over \|
{U}\e(\cdot,\tilde{\mathbf{s}}^{k})-\tilde{w}_k\e\|^2_{L_2(Y\e)}},
\end{gather}
where $\tilde{w}_{k}\e(x)=
\ds\suml_{l=1}^{k-1}\left({U}\e_k(\cdot,\tilde{\mathbf{s}}^{k}),
u\e_l(\cdot)\right)_{L_2(Y\e)}u_l\e(x)$.
Using the same arguments as in Lemmas \ref{Nlm1}, \ref{Nlm2} one can easily 
prove that
\begin{gather*}
\eps^{-2}\|\nabla {U}\e(\cdot,\tilde{\mathbf{s}}^{k})\|^2_{L_2(Y\e)}\sim 
F(\tilde{\mathbf{s}}^{k}),\quad \| 
{U}\e(\cdot,\tilde{\mathbf{s}}^{k})\|^2_{L_2(Y\e)}\sim 
(\tilde{\mathbf{s}}^{k},\tilde{\mathbf{s}}^{k})_B=1,\\
\liml_{\eps\to 0}\left(\eps^{-2}\|\nabla 
\tilde{w}_{k}\e\|^2_{L_2(Y\e)}+\|\tilde{w}_{k}\e\|^2_{L_2(Y\e)}\right) =0
\end{gather*}
and therefore \eqref{ff3} implies
\begin{gather}\label{ff4}
\liml_{\eps\to 0}\eps^{-2}\lambda_k^{N,\eps}\leq
F(\tilde{\mathbf{s}}^{k}).
\end{gather}

It follows from \eqref{Nlm2main}, \eqref{ff1}, \eqref{ff4} that
\begin{gather*}
\liml_{\eps\to
0}\eps^{-2}\lambda_{k}^{N,\eps}=F(\tilde{\mathbf{s}}^{k}).
\end{gather*}
The lemma is proved.
\end{proof}

It is more convenient to work with the usual scalar product in
$\mathbb{R}^{m+1}$ (instead of the product $(\cdot,\cdot)_B$). In
this connection we reformulate Lemmas \ref{Nlm2}-\ref{Nlm3}.
We introduce the function
$\mathcal{F}:\mathbb{R}^{m+1}\to[0,\infty)$ by
$$\mathcal{F}(q)=\suml_{j=1}^m \sigma_j b_{j}\left({q_{m+1}\over 
\sqrt{|B_{m+1}|}}-
{q_{j}\over \sqrt{|B_{j}|}}\right)^2.$$ We also introduce the
vectors $\mathbf{q}^k\in \mathbb{R}^{m+1}$, $k=1,\dots,m+1$ by the
formula
$$\mathbf{q}_j^k=\mathbf{s}_j^k\sqrt{|B_j|},\ j=1,\dots,m+1.$$

Then, obviously,  Lemmas \ref{Nlm2}-\ref{Nlm3} can be reformulated in the 
following way:

\begin{corollary}\label{Ncor}
One has for $k=1,\dots,m+1$:
\begin{gather}\label{Nlmadd}
\liml_{\eps\to
0}\eps^{-2}\lambda_{k}^{N,\eps}=\mathcal{F}(\mathbf{q}^{k})\leq
\mathcal{F}(q)\text{ for any }q\in
\mathcal{H}({\mathbf{q}}^1,\dots,{\mathbf{q}}^{k-1}),
\end{gather}
where
$$\mathcal{H}({\mathbf{q}}^1,\dots,{\mathbf{q}}^{k-1})=\{q\in\mathbb{R}^{m+1}:\ 
(q,q)=1,\ (q,{\mathbf{q}}^l)=0,\ l=\overline{1,k-1}\}.$$
Here $(\cdot , \cdot)$ is the usual scalar product in
$\mathbb{R}^{m+1}$, i.e. $(q,s)=\suml_{j=1}^{m+1}q_js_j$.
\end{corollary}

It is clear that $\mathbf{q}^{1}$ is either
$\left(\sqrt{|B_1|},\sqrt{|B_2|},\dots,\sqrt{|B_{m+1}|}\right)$ or
$-\left(\sqrt{|B_1|},\sqrt{|B_2|},\dots,\sqrt{|B_{m+1}|}\right)$.

Let us denote by  $E_\varkappa$ (here $\varkappa>0$ is a parameter) the
$(m-1)$-dimensional ellipsoid which is a cross-section of the
elliptic cylinder $\mathcal{F}(q)=\varkappa$ by the hyperplane
$\{q\in\mathbb{R}^{m+1}:\ (q,{\mathbf{q}}^1)=0\}$:
\begin{gather*}
E_\varkappa=\{q\in\mathbb{R}^{m+1}:\ \mathcal{F}(q)=\varkappa,\
(q,{\mathbf{q}}^1)=0\}.
\end{gather*}
We denote by $h_1(\varkappa)\geq h_2(\varkappa)\geq\dots\geq h_m(\varkappa)$ the
half-axes of this ellipsoid (recall that there is some orthogonal
change of variables $q\mapsto\tilde q$ such that $E_\varkappa$ has the
following form in coordinates $x$:
$$\tilde q_{m+1}=0,\ \suml_{j=1}^{m} \left({\tilde q_j\over 
h_j(\varkappa)}\right)^2=1\quad\text{ ).}$$

By $S$ we denote the $(m-1)$-dimensional unit sphere which is a
cross-section of the $m$-dimensional sphere
$\{q\in\mathbb{R}^{m+1}:\ (q,q)=1\}$ by the plane
$\{q\in\mathbb{R}^{m+1}:\ (q,{\mathbf{q}}^1)=0\}$.

Let $\varkappa_1, \varkappa_2,\dots \varkappa_{m}$ be the numbers satisfying
$$h_k(\varkappa_k)=1.$$
We also set $\varkappa_0=0$. It is clear that $\varkappa_j\leq\varkappa_{j+1}$. 
Later
we will prove (see the end of the proof of Lemma \ref{Nlm5}) that
if $\sigma_j<\sigma_{j+1}$ ($j=1,\dots,m-1$) then
\begin{gather}\label{2po}
\varkappa_j<\varkappa_{j+1},\ j=0,\dots,m-1.
\end{gather}
The ellipsoids $E_{\varkappa_k}$, $k=1,\dots,m$ touch the sphere $S$.
Taking into account \eqref{2po} it is easy to show that they touch
$S$ only in two points $\tilde{\mathbf{q}}^k_{-}$ and
$\tilde{\mathbf{q}}^k_{+}$ which are symmetric to each other with
respect to the origin and satisfies the following properties:
\begin{gather}\label{t1}
\forall k,l=\overline{1,m}:\
(\tilde{\mathbf{q}}^k_{\pm},\tilde{\mathbf{q}}^l_{\pm})=\delta_{kl},\\\label{t2}
\forall k=\overline{1,m},\ \forall \varkappa\in 
(\varkappa_{k-1},\varkappa_{k}):\
E_\varkappa\cap \left\{q\in\mathbb{R}^{m+1}:\ (q,q)=1,\
(q,\mathbf{q}^l)=0,\
l=\overline{1,k-1}\right\}=\varnothing,\\\label{t3} \forall
k=\overline{1,m}:\
\mathcal{F}(\tilde{\mathbf{q}}^k_\pm)=\varkappa_{k}.
\end{gather}

The following lemma follows easily from Corollary \ref{Ncor} and 
\eqref{2po}-\eqref{t3}.
\begin{lemma}\label{Nlm4}
One has for $k=2,\dots,m+1$
\begin{gather*}
\mathbf{q}^k=\tilde{\mathbf{q}}^{k-1}_{-}\text{ or
}\mathbf{q}^k=\tilde{\mathbf{q}}^{k-1}_{+} \text{(and therefore
}\liml_{\eps\to
0}\eps^{-2}\lambda_{k}^{N,\eps}=\varkappa_{k-1}\text{)}.
\end{gather*}
\end{lemma}

Now, let us make a change of variables $q\overset{f}\mapsto p$:
\begin{gather*}
p_j=\sqrt{\sigma_j|B_{j}|}\left({q_{m+1}\over \sqrt{|B_{m+1}|}}-
{q_{j}\over \sqrt{|B_{j}|}}\right),\ j=1,\dots,m,\quad
p_{m+1}=\suml_{i=1}^{m+1}q_i\sqrt{|B_i|}.
\end{gather*}
Simple calculations shows that $f$ maps
\begin{itemize}
\item the plane $\{q\in\mathbb{R}^{m+1}:\ (q,{\mathbf{q}}_1)=0\}$
onto the plane $\{p\in\mathbb{R}^{m+1}:\ p_{m+1}=0\}$,

\item the ellipsoid $E_\varkappa$ onto the sphere
$$S_\varkappa=\{p\in\mathbb{R}^{m+1}:\ \suml_{i=1}^m p_i^2=\varkappa,\ 
p_{m+1}=0\},$$

\item the sphere $S$ onto the ellipsoid
$$E=\left\{p\in\mathbb{R}^{m+1}:\ \suml_{i=\overline{1,m}}
p_i^2{1-|B_i|\over\sigma_i}-\suml_{i,j=\overline{1,m};i\not=j} p_i
p_j\sqrt{|B_i||B_j|\over\sigma_i\sigma_j} =1,\
p_{m+1}=0\right\}.$$

\end{itemize}

As any linear non-degenerate map $f$ preserves tangency points,
i.e. $S_{\varkappa_k}$ touches $E$ in the points
$\tilde{\mathbf{p}}^{k}_{\pm}=f(\tilde{\mathbf{q}}^{k}_{\pm})$. We
denote $h_1\leq h_2\le\dots\leq h_m$ the semiaxes of the ellipsoid
$E$. It is clear that $S_\varkappa$ touches $E$ iff  for some $j$ one has
$\varkappa=h_j^2$. Therefore using Lemma \ref{Nlm4} we get for $k=2,\dots
m+1$
\begin{gather*}
\liml_{\eps\to 0}\eps^{-2}\lambda_{k}^{N,\eps}=h_{k-1}^2.
\end{gather*}

Thus in order to complete the proof of Theorem \ref{Nth} it
remains to prove the following lemma.

\begin{lemma}\label{Nlm5} One has for $k=1,\dots,m$:
\begin{gather}\label{hmu}
\mu_k=h_k^2.
\end{gather}
\end{lemma}

\begin{proof}
It is well-known that the numbers $h_k^{2}$  are the roots of the equation
\begin{gather}\label{det}
\mathrm{det}(M-\lambda^{-1} I)=0,\ \lambda\in\mathbb{C}\text{ is
unknown number},
\end{gather}
where $I$ is the identity $m\times m$ matrix, and the matrix 
$M=\{M_{ij}\}_{i,j=1}^m$ is defined by
$$M_{ij}=\ds{1-|B_i|\over\sigma_i}\text{\ if }i=j\text{\quad and\quad 
}M_{ij}=-\ds\sqrt{|B_i||B_j|\over\sigma_i\sigma_j}\text{\ if }i\not= j.$$

We denote by $M(i_1,i_2,\dots,i_k)$ the minor of the matrix
$M$ which is on the intersection of $i_1$-th,
$i_2$-th,$\dots,i_k$-th rows
 and the columns with the same indexes. One has
the following formula (see \cite{Marcus}):
\begin{gather}\label{MM}
\mathrm{det}(M-\lambda^{-1} I)=\suml_{k=0}^m\lambda^{k-m}E_k(M),
\end{gather}
where 
\begin{gather}\label{MM+}
E_0=(-1)^{m},\quad E_k(M)=(-1)^{m-k}\suml_{1\leq
i_1<i_2<\dots < i_k\leq m}M(i_1,i_2,\dots,i_k).
\end{gather}

Let us prove that for $k=1,\dots,m$
\begin{gather}\label{minor}
M(i_1,i_2,\dots,i_k)={1-|B_{i_1}|-|B_{i_2}|-\dots-|B_{i_k}|\over 
\sigma_{i_1}\sigma_{i_2}\dots\sigma_{i_k}}.
\end{gather}
We carry out the proof by induction. For $k=1$ and $k=2$
\eqref{minor} can be easily proved via direct calculations. Now,
suppose that (\ref{minor}) is valid for $k=l-1,l-2$ and let us
prove it for $k=l$. Obviously it is enough to prove (\ref{minor})
only for $i_1=1,i_2=2,\dots,i_l=l$. One has
\begin{multline*}
M(1,2,\dots,l)=\mathrm{det}\left(
\begin{matrix}
{1-B_1\over\sigma_1}&-\sqrt{|B_1||B_2|\over\sigma_1\sigma_2}&\dots 
&-\sqrt{|B_1||B_l|\over\sigma_1\sigma_l}\\-\sqrt{|B_2||B_1|\over\sigma_2\sigma_1
} & {1-B_2\over\sigma_2} & \dots &
-\sqrt{|B_2||B_l|\over\sigma_2\sigma_l}\\\vdots&\vdots&\ddots&\vdots\\-\sqrt{
|B_l||B_1|\over\sigma_l\sigma_1}& -\sqrt{|B_l||B_2|\over\sigma_l\sigma_2}&\dots 
&{1-B_l\over\sigma_l}
\end{matrix}\right)= {1\over\sigma_1}M(2,3,\dots,l)+\\+\mathrm{det}\left(
\begin{matrix}
-{B_1\over\sigma_1}&-\sqrt{|B_1||B_2|\over\sigma_1\sigma_2}&\dots 
&-\sqrt{|B_1||B_l|\over\sigma_1\sigma_l}\\0& {1\over\sigma_2} & \dots &
0\\\vdots&\vdots&\ddots&\vdots\\-\sqrt{|B_l||B_1|\over\sigma_l\sigma_1}& 
-\sqrt{|B_l||B_2|\over\sigma_n\sigma_2}&\dots &{1-B_l\over\sigma_l}
\end{matrix}\right)+\mathrm{det}\left(
\begin{matrix}
-{B_1\over\sigma_1}&-\sqrt{|B_1||B_2|\over\sigma_1\sigma_2}&\dots 
&-\sqrt{|B_1||B_l|\over\sigma_1\sigma_l}\\-\sqrt{|B_2||B_1|\over\sigma_2\sigma_1
} & -{B_2\over\sigma_2} & \dots &
-\sqrt{|B_2||B_l|\over\sigma_2\sigma_l}\\\vdots&\vdots&\ddots&\vdots\\-\sqrt{
|B_l||B_1|\over\sigma_l\sigma_1}& -\sqrt{|B_l||B_2|\over\sigma_l\sigma_2}&\dots 
&{1-B_l\over\sigma_l}
\end{matrix}\right).
\end{multline*}
The third determinant is equal to $0$ because its first row is equal to its 
second one multiplied by $\sqrt{\sigma_2|B_1|\over\sigma_1|B_2|}$. The second 
determinant can be written as
\begin{gather*}
\mathrm{det}\left(
\begin{matrix}
-{|B_1|\over\sigma_1}&-\sqrt{|B_1||B_2|\over\sigma_1\sigma_2}&\dots 
&-\sqrt{|B_1||B_l|\over\sigma_1\sigma_l}\\0& {1\over\sigma_2} & \dots &
0\\\vdots&\vdots&\ddots&\vdots\\-\sqrt{|B_l||B_1|\over\sigma_l\sigma_1}& 
-\sqrt{|B_l||B_2|\over\sigma_l\sigma_2}&\dots &{1-|B_l|\over\sigma_l}
\end{matrix}\right)={1\over\sigma_2}\left(M(1,3,4,\dots,l)-{1\over\sigma_1}M(3,4
,\dots,l)\right).
\end{gather*}
Finally using formula (\ref{minor}) for $k=l-1,l-2$ we obtain
\begin{multline*}
M(1,2,\dots,l)= 
{1\over\sigma_1}M(2,3,\dots,l)+{1\over\sigma_2}\left(M(1,3,4,\dots,l)-{
1\over\sigma_1}M(3,4,\dots,l)\right)=\\=
{1\over\sigma_1}{1-\suml_{j=\overline{2,l}} 
|B_j|\over\prod\limits_{j=\overline{2,l}} 
\sigma_j}+{1\over\sigma_2}\left({1-\suml_{j=\overline{1,l};j\not=2} 
|B_j|\over\prod\limits_{j=\overline{1,l};j\not=2} 
\sigma_j}-{1\over\sigma_1}{1-\suml_{j=\overline{3,l}} 
|B_j|\over\prod\limits_{j=\overline{3,l}} \sigma_j}\right)=\\=
{1-\suml_{j=\overline{2,l}} 
|B_j|-\suml_{j=\overline{1,l};j\not=2}|B_j|+\suml_{j=\overline{3,l}} |B_j|\over 
\prod\limits_{j=\overline{1,l}}\sigma_j }={1-\suml_{j=\overline{1,l}} |B_j|\over 
\prod\limits_{j=\overline{1,l}}\sigma_j }
\end{multline*}
and (\ref{minor}) is proved.

Now let us consider equation \eqref{mu_eq}. Multiplying it by 
$|B_{m+1}|\prod\limits_{j=1}^{m}(\sigma_j-\lambda)$ and then grouping the terms 
with the same exponents of $\lambda$  we obtain
\begin{gather}\label{mu_eq_new}
\suml_{k=0}^m \lambda^{m-k}A_k=0,
\end{gather}
where
\begin{gather}\label{mu_eq_new+}
A_k=(-1)^{m-k} \suml_{1\leq i_1<i_2<\dots<i_k\leq
m}\sigma_{i_1}\sigma_{i_2}\dots\sigma_{i_k}\left(|B_{m+1}|+|B_{i_1}|+|B_{i_2}
|+\dots+|B_{i_k}|
\right).
\end{gather}

Using  \eqref{minor}  and the equalities
\begin{gather*}
|B_{m+1}|+|B_{i_1}|+|B_{i_2}|+\dots+|B_{i_k}|=1-|B_{j_1}|-|B_{j_2}|-\dots 
-|B_{j_{m-k}}|,\quad
\sigma_{i_1}\sigma_{i_2}\dots\sigma_{i_k}={ \prod\limits_{j=1}^m \sigma_j\over 
\sigma_{j_1}\sigma_{i_2}\dots\sigma_{j_{m-k}}}
\end{gather*}
where 
$\{j_1,j_2,\dots,j_{m-k}\}=\{1,2,\dots,m\}\setminus\{i_1,i_2,\dots,i_{k}\}$ we 
can rewrite $A_k$ as
\begin{gather}\label{kt}
A_k=(-1)^m\left(\prod\limits_{j=1}^m \sigma_j\right)E_{m-k}\textcolor{black}{.}
\end{gather}

Finally we divide (\ref{mu_eq_new}) by $(-\lambda)^m \left(\prod\limits_{j=1}^m 
\sigma_j\right)$ and taking into account \eqref{MM}-\eqref{minor}, \eqref{kt} we 
get the equation \eqref{det}. Thus we have just proved that
\begin{gather}\label{lra}
\lambda\in 
\mathbb{R}\setminus\left(\{0\}\cup\cupl_{j=1}^m\{\sigma_j\}\right)\text{ is a 
root of \eqref{mu_eq}}\ \Longrightarrow\ \lambda\text{ is a root of 
\eqref{det}}.
\end{gather}
Since \eqref{mu_eq} has exactly $m$ roots $\mu_k\in 
\mathbb{R}\setminus\left(\{0\}\cup\cupl_{j=1}^m\{\sigma_j\}\right)$ and 
$\mu_k\not=\mu_l$ for $k\not=l$ then, obviously, \eqref{lra} implies 
\eqref{hmu}. Let us note that \eqref{lra} imply also  inequality \eqref{2po}. 

Lemma \ref{Nlm5} is proved which ends the proof of Theorem \ref{Nth}.
\end{proof}
\end{proof}

\subsection{\label{ss24}Asymptotics of the first $m$ eigenvalues of the 
Dirichlet Laplacian}

We get more complete information about the
behaviour of $\lambda_k^{D,\eps}$ , $k=1,\dots,m$.

\begin{theorem}\label{Dth}
For $k=1,\dots,m$ one has
\begin{gather}\label{Dthmain}
\liml_{\eps\to 0}\eps^{-2}{\lambda}_k^{D,\eps}=\sigma_{k}.
\end{gather}
\end{theorem}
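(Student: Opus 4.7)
The proof parallels that of Theorem \ref{Nth} but is essentially simpler, because the Dirichlet condition on $\partial Y$ eliminates the ``outer'' degree of freedom corresponding to the index $j=m{+}1$ that produced the coupling in the Neumann case. The key observation is that whenever $s_{m+1}=0$ the test function $U\e(\cdot,s)$ from \eqref{Nvb} lies in $\widehat{H}^1_0(Y\e)$: indeed, by the support condition \eqref{r} each cutoff $\varphi(|x-\tilde x^i|/r)$ vanishes in a neighbourhood of $\partial Y$, so $U\e(\cdot,s)$ coincides with $s_{m+1}=0$ on $\partial Y$. For such $s$ the asymptotics \eqref{Nvbest1} and \eqref{Nvbest3+} specialize to
\begin{gather*}
\|\nabla U\e(\cdot,s)\|^2_{L_2(Y\e)}\sim\eps^2\suml_{j=1}^{m}\sigma_j b_j s_j^2,\qquad
\|U\e(\cdot,s)\|^2_{L_2(Y\e)}\sim\suml_{j=1}^{m}b_j s_j^2\quad(\eps\to 0).
\end{gather*}

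For the upper bound, I would apply the min-max principle to the $k$-dimensional subspace $V_k\subset\widehat{H}^1_0(Y\e)$ spanned by $\{U\e(\cdot,s^{(1)}),\dots,U\e(\cdot,s^{(k)})\}$, where $s^{(j)}$ is the vector in $\mathbb{R}^{m+1}$ with a $1$ in the $j$-th position and zeros elsewhere. By linearity of $U\e$ in its second argument, a generic element of $V_k$ is $U\e(\cdot,s)$ with $s=(s_1,\dots,s_k,0,\dots,0)$, and the ordering $\sigma_1<\sigma_2<\dots<\sigma_m$ from \eqref{sigma_ord} yields
$$
\lambda_k^{D,\eps}\leq\maxl_{u\in V_k\setminus\{0\}}\frac{\|\nabla u\|^2_{L_2(Y\e)}}{\|u\|^2_{L_2(Y\e)}}\sim\eps^2\maxl_{(s_1,\dots,s_k)\neq 0}\frac{\suml_{j=1}^{k}\sigma_j b_j s_j^2}{\suml_{j=1}^{k}b_j s_j^2}=\eps^2\sigma_k,
$$
so $\limsup_{\eps\to 0}\eps^{-2}\lambda_k^{D,\eps}\leq\sigma_k$.

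For the matching lower bound I would mimic Lemmas \ref{Nlm1}--\ref{Nlm3}. Let $\{u_k\e\}$ be a real $L_2$-orthonormal basis of Dirichlet eigenfunctions. The Poincar\'e inequality on each $B_j$ ($j\leq m$) and the Friedrichs inequality on $B_{m+1}$ (the latter exploiting the Dirichlet data on $\partial Y$) yield
$$
\|u_k\e-\langle u_k\e\rangle_{B_j}\|^2_{L_2(B_j)}\leq C\lambda_k^{D,\eps}\ (j\leq m),\qquad \|u_k\e\|^2_{L_2(B_{m+1})}\leq C\lambda_k^{D,\eps}.
$$
Combined with the upper bound $\lambda_k^{D,\eps}=O(\eps^2)$ just established, these let me extract, along a subsequence, limits $\mathbf{s}^k_j$ such that $u_k\e\to\mathbf{s}^k_j$ strongly in $L_2(B_j)$ for $j\leq m$ and $u_k\e\to 0$ strongly in $L_2(B_{m+1})$; orthonormality passes to the limit as $\suml_{j=1}^{m}b_j\mathbf{s}^k_j\mathbf{s}^l_j=\delta_{kl}$. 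Setting $\mathbf{s}^k_{m+1}:=0$ and approximating $u_k\e$ by $U\e(\cdot,\mathbf{s}^k)$ minus its projection onto $\mathrm{span}(u_1\e,\dots,u_{k-1}\e)$, exactly as in Lemma \ref{Nlm2}, I would obtain $\liml_{\eps\to 0}\eps^{-2}\lambda_k^{D,\eps}=\suml_{j=1}^{m}\sigma_j b_j(\mathbf{s}^k_j)^2$, and the analogue of Lemma \ref{Nlm3} would identify $\mathbf{s}^k$ as a successive minimizer of this quadratic form over the constraint set $\{s\in\mathbb{R}^m:\suml_{j=1}^{m}b_j s_j^2=1,\ \suml_{j=1}^{m}b_j s_j\mathbf{s}^l_j=0,\ l<k\}$.

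The final step is to diagonalize. The substitution $t_j=\sqrt{b_j}\,\mathbf{s}^k_j$ turns the constraint set into the unit sphere of $\mathbb{R}^m$ together with standard orthogonality to the previous minimizers, while the quadratic form becomes $\suml_{j=1}^{m}\sigma_j t_j^2$, which is already diagonal with coefficients $\sigma_1<\dots<\sigma_m$ arranged in increasing order. Therefore the $k$-th successive minimum is precisely $\sigma_k$, proving \eqref{Dthmain}. The only genuinely new point beyond the Neumann analysis is verifying that the test functions land in $\widehat{H}^1_0(Y\e)$; this is automatic from \eqref{r}, so there is no real obstacle beyond those already surmounted in the proof of Theorem \ref{Nth}.
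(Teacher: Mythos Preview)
Your proof is correct and follows essentially the same route as the paper's: the key observation that $s_{m+1}=0$ forces $U\e(\cdot,s)\in\widehat H^1_0(Y\e)$, the Friedrichs inequality on $B_{m+1}$ yielding $\mathbf{s}^k_{m+1}=0$, the approximation of $u_k\e$ by $U\e(\cdot,\mathbf{s}^k)$ as in Lemma~\ref{Nlm2}, and the diagonalization via $t_j=\sqrt{b_j}\,s_j$ all match the paper's Lemmas~\ref{Dlm1}--\ref{Dlm4} and Corollary~\ref{Dcor}. The only cosmetic difference is your upper bound, where you use the subspace form of min-max on $V_k=\mathrm{span}\{U\e(\cdot,e_1),\dots,U\e(\cdot,e_k)\}$ to get $\limsup\eps^{-2}\lambda_k^{D,\eps}\leq\sigma_k$ directly, whereas the paper first proves the cruder bound $\lambda_k^{D,\eps}\leq C\eps^2$ by induction (Lemma~\ref{Dlm1}) and only later identifies the sharp constant; your variant is slightly cleaner but not substantively different.
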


\begin{remark}
We carry out the proof in the same way as in Theorem \ref{Nth}. But in the 
Dirichlet case the proof is simplified by the fact that the eigenfunctions 
corresponding to the first $m$ eigenvalues converge to $0$ in $B_{m+1}$ (i.e. 
$\mathbf{s}^k_{m+1}=0$, $k=1,\dots,m$). This observation leads to the functional 
$\mathcal{F}_0$ having more simple form comparing with the functional 
$\mathcal{F}$.
\end{remark}

\begin{proof}
Let $u_k\e$, $k\in\mathbb{N}$ be the eigenfunctions corresponding to 
$\lambda_k^{D,\eps}$ and satisfying the condition
\begin{gather}\label{Dort}
(u_k\e,u_l\e)_{L_2(Y\e)}=\delta_{kl},\\
u_k\e\text{ are real functions}.\notag
\end{gather}
For $\lambda_k^{D,\eps}$ the min-max principle looks as follows:
\begin{gather}\label{Dminimax1}
\forall k\in \mathbb{N}:\ \lambda_k^{D,\eps}=\minl_{u\in
\overset{\circ}{H}(u_1\e,\dots,u_{k-1}\e)}{\|\nabla
u\|^2_{L_2(Y\e)}\over \|u\|^2_{L_2(Y\e)}},
\end{gather}
where
\begin{gather}\label{Dhk}
\overset{\circ}{H}(u_1\e,\dots,u_{k-1}\e)=\left\{u\in H^1(Y\e):\
u|_{\partial Y}=0,\ (u,u_l\e)_{L_2(Y\e)},\  l=1,\dots,k-1\right\}.
\end{gather}
As in Theorem \ref{Nth} we conclude that there exist a subsequence (still 
denoted by $\eps$) and numbers $\mathbf{s}^k_{j}\in\mathbb{R}$, $k\in 
\mathbb{N}$, $j=1,\dots,m+1$ such that
\begin{gather}\label{Dbetadef}
\liml_{\eps\to 0}\langle u_{k}\e\rangle_{B_j}=\mathbf{s}^k_{j}.
\end{gather}
We denote
$\mathbf{s}^k=(\mathbf{s}^k_{1},\dots,\mathbf{s}^k_{m+1})\in\mathbb{R}^{m+1}$.
Below we will prove that $\mathbf{s}^k_{m+1}=0$ whenever $k\leq
m$.

\begin{lemma}\label{Dlm1}
One has for $k=1,\dots,m$:
\begin{gather}\label{Dlm1main}
\liml_{\eps\to 0}\lambda_k^{D,\eps}\leq C\eps^2.
\end{gather}
\end{lemma}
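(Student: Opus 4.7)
My plan is to transplant the inductive argument of Lemma \ref{Nlm1} to the Dirichlet setting, with the single structural modification that the parameter vector $s\in\mathbb{R}^{m+1}$ used to build the test functions $U\e(\cdot,s)$ must have $s_{m+1}=0$. The key observation is that, for $x\in\partial Y$, condition \eqref{r} gives $|x-\tilde x^i|\geq r$, so every cutoff $\varphi(|x-\tilde x^i|/r)$ appearing in \eqref{Nvb} vanishes by \eqref{varphi}; the $B_{m+1}$-branch of \eqref{Nvb} then reduces to $U\e(x,s)=s_{m+1}$ on $\partial Y$. Hence $U\e(\cdot,s)$ automatically lies in $\mathrm{dom}(\eta^{D,\eps})$ whenever $s_{m+1}=0$, and the estimates \eqref{Nvbest1}, \eqref{Nvbest3+} transfer verbatim.

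The second, simpler observation is that the bound $\lambda_l^{D,\eps}\leq C\eps^2$ forces $\mathbf{s}^l_{m+1}=0$. Indeed, the Friedrichs inequality on the fixed Lipschitz domain $B_{m+1}$, applied to functions vanishing on the portion $\partial Y\subset \partial B_{m+1}$ (of positive $(n-1)$-measure), yields $\|u_l\e\|^2_{L_2(B_{m+1})}\leq C\|\nabla u_l\e\|^2_{L_2(B_{m+1})}\leq C\lambda_l^{D,\eps}\to 0$, so by \eqref{Dbetadef} the limit $\mathbf{s}^l_{m+1}=\liml_{\eps\to 0}\langle u_l\e\rangle_{B_{m+1}}$ must vanish. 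This is precisely the simplification advertised in the remark preceding the lemma.

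With these ingredients I would run the induction on $k$ exactly as in Lemma \ref{Nlm1}. The base case $k=1$ uses $\hat s=(1,0,\dots,0,0)$ in the Dirichlet min-max \eqref{Dminimax1}, yielding $\lambda_1^{D,\eps}\leq \eps^2 F(\hat s)/(\hat s,\hat s)_B(1+o(1))=\sigma_1\eps^2(1+o(1))\leq C\eps^2$. For the inductive step, assuming \eqref{Dlm1main} for $l\leq k'-1$ with $k'\leq m$, the Friedrichs step just described gives $\mathbf{s}^l_{m+1}=0$ for all such $l$, so the $k'-1\leq m-1$ linear conditions $(\hat s,\mathbf{s}^l)_B=0$ admit a nonzero solution $\hat s$ inside the $m$-dimensional hyperplane $\{s_{m+1}=0\}$. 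Setting $\hat u\e=U\e(\cdot,\hat s)-w\e$, where $w\e$ is the $L_2(Y\e)$-projection of $U\e(\cdot,\hat s)$ onto $\mathrm{span}(u_1\e,\dots,u_{k'-1}\e)$, one has $\hat u\e\in \overset{\circ}{H}(u_1\e,\dots,u_{k'-1}\e)$, and the estimates leading to \eqref{w1}--\eqref{w2} transfer without change, producing $\lambda_{k'}^{D,\eps}\leq \|\nabla\hat u\e\|^2/\|\hat u\e\|^2 \sim \eps^2 F(\hat s)/(\hat s,\hat s)_B\leq C\eps^2$.

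No step should be genuinely difficult: capacity estimates, the $\psi$-calculus near each hole, the Poincaré-in-$B_j$ bounds, and the Gram--Schmidt cleanup of the test function all carry over directly from the Neumann analysis. The only two things worth verifying with care are (i) the hyperplane constraint $\hat s_{m+1}=0$ is compatible with nontriviality of $\hat s$, which is why the statement is restricted to $k\leq m$ (at most $m-1$ orthogonality conditions imposed on an $m$-dimensional space), and (ii) the admissibility $U\e(\cdot,\hat s)|_{\partial Y}=0$, which is automatic as explained above. The main conceptual point, rather than any technical obstacle, is recognizing that the Dirichlet condition on $\partial Y$ forces the problem to live in the one-lower-dimensional subspace $\{s_{m+1}=0\}$, which is exactly what eliminates the ``spurious'' zero eigenvalue present in the Neumann case and shifts the induction range from $k\leq m+1$ to $k\leq m$.
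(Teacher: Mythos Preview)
Your proposal is correct and follows essentially the same approach as the paper's own proof: induction on $k$, with the test function $U\e(\cdot,\hat s)$ restricted to the hyperplane $\{s_{m+1}=0\}$ so that it vanishes on $\partial Y$, and the Friedrichs inequality on $B_{m+1}$ used to force $\mathbf{s}^l_{m+1}=0$ for the earlier eigenfunctions, making the $k'-1$ orthogonality conditions solvable inside that $m$-dimensional hyperplane. The only cosmetic difference is that you single out the specific base vector $\hat s=(1,0,\dots,0)$ whereas the paper just takes any nonzero $s$ with $s_{m+1}=0$.
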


\begin{proof} As in Lemma \ref{Nlm1} we carry out the proof by induction.
For an arbitrary $s\in \mathbb{R}^{m+1}$ such that $s_{m+1}=0$ one
has:
\begin{gather*}
\lambda_1^{D,\eps}\leq {\|\nabla {U}\e(\cdot, s)\|^2_{L_2(Y\e)}\over  \| 
{U}\e(\cdot, s)\|^2_{L_2(Y\e)}}\leq C\eps^2
\end{gather*}
(recall that the function $U\e$ is defined by \eqref{Nvb}). Here we can use the 
min-max principle since $U\e(x,s)=0$ for $x\in\partial Y$ whenever $s_{m+1}=0$. 
Thus \eqref{Dlm1main} is valid for $k=1$.

Now, suppose that  \eqref{Dlm1main} is valid for $k\leq k'-1$ and
let us prove it for $k=k'$. One has the following Poincar\'{e} (for
$B_j$, $j=1,\dots,m$) and Friedrichs inequalities (for $B_{m+1}$):
\begin{gather}\label{DPo}j=1,\dots,m:\quad
\|u_k\e-\langle u_{k}\e\rangle_{B_j}\|^2_{L_2(B_j)}\leq  C\lambda_k^{D,\eps},\\
\label{DFr} \|u_k\e\|^2_{L_2(B_{m+1})}\leq  C\lambda_k^{D,\eps}.
\end{gather}
Using \eqref{DPo}, \eqref{DFr} and taking into account the validity of 
\eqref{Dlm1main} for $k\leq k'-1$ we get
\begin{gather}\label{Dconv1}
\forall l=1,\dots,k'-1:\begin{cases}u_{l}\e\underset{\eps\to 0}\to
\mathbf{s}^{l}_j\text{ strongly in }L_2(B_j),\ j=1,\dots,m,\\
u_{l}\e\underset{\eps\to 0}\to \mathbf{s}^{l}_{m+1}=0\text{
strongly in }L_2(B_{m+1}).
\end{cases}
\end{gather}

Let $\hat s\in\mathbb{R}^{m+1}\setminus\{0\}$ be an arbitrary
vector satisfying:
\begin{gather}\label{Dhats}
\hat{s}_{m+1}=0\text{ and }(\hat{s},\mathbf{s}^l)_{B}=0,\
l=1,\dots,k'-1.
\end{gather}
The choice of such a vector is always possible whenever $k'\leq m$. We denote
$$\hat u\e(x)=U\e(x,\hat s)-w\e(x,\hat s),\text{ where }
w\e(x,\hat s)=\suml_{l=1}^{k'-1}(U\e(\cdot,\hat
s),u_l\e(\cdot))_{L_2(Y\e)}u_l\e(x).
$$
Obviously $\hat u\e\in \overset{\circ}{H}(u_1\e,\dots,u_{k'-1}\e)$. In the same 
way as in Lemma \ref{Nlm1} (see \eqref{w1}-\eqref{w2}) we obtain
\begin{gather}\label{Dw1}
\liml_{\eps\to 0}\left(\|w\e(\cdot,\hat s)\|_{L_2(Y\e)}^2+ \eps^{-2} \|\nabla
w\e(\cdot,\hat s)\|_{L_2(Y\e)}^2\right)=0.
\end{gather}

Finally  using \eqref{Dminimax1} and taking into account
\eqref{Nvbest1}, \eqref{Nvbest3+}, \eqref{Dw1} we get
\begin{gather*}
\lambda_{k'}^{D,\eps}\leq {\|\nabla \hat u\e\|^2_{L_2(Y\e)}\over
\| \hat u\e\|^2_{L_2(Y\e)}}\sim {\|\nabla {U}\e(\cdot,\hat
s)\|^2_{L_2(Y\e)}\over  \|{U}\e(\cdot,\hat s)\|^2_{L_2(Y\e)}}\leq
C\eps^2
\end{gather*}
and \eqref{Dlm1main} is proved.

\end{proof}

\begin{lemma}\label{Dlm2}
One has for $k=1,\dots,m$:
\begin{gather}\label{Dlm2main}
\liml_{\eps\to 0}\eps^{-2}\lambda_k^{D,\eps}=F(\mathbf{s}^k).
\end{gather}
\end{lemma}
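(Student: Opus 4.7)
The plan is to follow the strategy of Lemma \ref{Nlm2} essentially verbatim, exploiting one structural simplification peculiar to the Dirichlet case: the fact that $\mathbf{s}^k_{m+1}=0$ for $k=1,\dots,m$. This will make every test function $U\e(\cdot,\mathbf{s}^k)$ automatically admissible for the Dirichlet min-max, with no need to introduce cut-offs near $\partial Y$.

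First I would upgrade the preparatory information on $\mathbf{s}^k$. Combining Lemma \ref{Dlm1} with the Poincar\'e inequalities \eqref{DPo} on $B_j$, $j=1,\dots,m$, gives $u_k\e\to\mathbf{s}^k_j$ strongly in $L_2(B_j)$. The Friedrichs inequality \eqref{DFr} on $B_{m+1}$ (valid because $u_k\e$ vanishes on $\partial Y$) together with Lemma \ref{Dlm1} yields $\|u_k\e\|^2_{L_2(B_{m+1})}\leq C\eps^2\to 0$, so $\mathbf{s}^k_{m+1}=0$ for $k=1,\dots,m$ and $u_k\e\to 0$ strongly in $L_2(B_{m+1})$. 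Passing to the limit in the orthonormality relations \eqref{Dort} then yields $(\mathbf{s}^k,\mathbf{s}^l)_B=\delta_{kl}$ for $k,l=1,\dots,m$.

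Next I would construct the approximate eigenfunction exactly as in \eqref{uappr}:
\begin{gather*}
\mathbf{u}_k\e(x)=U\e(x,\mathbf{s}^k)-\mathbf{w}_k\e(x),\quad
\mathbf{w}_k\e(x)=\suml_{l=1}^{k-1}(U\e(\cdot,\mathbf{s}^k),u_l\e)_{L_2(Y\e)}u_l\e(x).
\end{gather*}
Because $\mathbf{s}^k_{m+1}=0$, the definition \eqref{Nvb} gives $U\e(\cdot,\mathbf{s}^k)=0$ on $\partial Y$; the same is true for every $u_l\e$, so $\mathbf{u}_k\e\in\overset{\circ}{H}(u_1\e,\dots,u_{k-1}\e)$. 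The estimates \eqref{Nvbest1}, \eqref{Nvbest2}, \eqref{Nvbest3+} and the convergence of $(U\e(\cdot,\mathbf{s}^k),u_l\e)_{L_2(Y\e)}\to(\mathbf{s}^k,\mathbf{s}^l)_B=0$ for $l<k$ give, with no change,
\begin{gather*}
\|\nabla U\e(\cdot,\mathbf{s}^k)\|^2_{L_2(Y\e)}\sim\eps^2 F(\mathbf{s}^k),\quad \|U\e(\cdot,\mathbf{s}^k)\|^2_{L_2(Y\e)}\sim 1,\quad \|\Delta U\e(\cdot,\mathbf{s}^k)\|^2_{L_2(Y\e)}\leq C\eps^4,
\end{gather*}
as well as $\|\mathbf{w}_k\e\|^2+\eps^{-2}\|\nabla\mathbf{w}_k\e\|^2+\eps^{-4}\|\Delta\mathbf{w}_k\e\|^2\to 0$.

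Finally I would close the argument by the same two-sided comparison as in Lemma \ref{Nlm2}. The min-max principle \eqref{Dminimax1} applied to $\mathbf{u}_k\e$ yields the upper bound $\limsup_{\eps\to 0}\eps^{-2}\lambda_k^{D,\eps}\leq F(\mathbf{s}^k)$. For the matching lower bound one studies $\delta_k\e=u_k\e-\mathbf{u}_k\e$: the strong $L_2$-convergence of $u_k\e$, $U\e(\cdot,\mathbf{s}^k)$ on each $B_j$ shows $\|\delta_k\e\|_{L_2(Y\e)}\to 0$; integrating by parts in $(\Delta\mathbf{u}_k\e,\delta_k\e)_{L_2(Y\e)}$ as in \eqref{Ndeltaprem} and using the bound $\|\Delta\mathbf{u}_k\e\|^2_{L_2(Y\e)}\leq C\eps^4$ gives $\eps^{-2}\|\nabla\delta_k\e\|^2_{L_2(Y\e)}\to 0$, so $\eps^{-2}\|\nabla u_k\e\|^2_{L_2(Y\e)}\sim\eps^{-2}\|\nabla U\e(\cdot,\mathbf{s}^k)\|^2_{L_2(Y\e)}\to F(\mathbf{s}^k)$, as desired. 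There is no serious obstacle here beyond verifying that all the estimates of Lemma \ref{Nlm2} survive under the extra Dirichlet condition; this is automatic because the key test function $U\e(\cdot,\mathbf{s}^k)$ already vanishes on $\partial Y$ thanks to $\mathbf{s}^k_{m+1}=0$.
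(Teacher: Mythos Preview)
Your proposal is correct and follows essentially the same approach as the paper: you use the Friedrichs inequality on $B_{m+1}$ together with Lemma \ref{Dlm1} to show $\mathbf{s}^k_{m+1}=0$, then build the approximation $\mathbf{u}_k\e=U\e(\cdot,\mathbf{s}^k)-\mathbf{w}_k\e$ (which now automatically lies in the Dirichlet form domain) and repeat the $\delta_k\e$-argument of Lemma \ref{Nlm2} verbatim. The paper's proof is precisely this, stated more tersely as ``repeating word-by-word the arguments of Lemma \ref{Nlm2}''.
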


\begin{proof}
Using the Poincar\'{e} (for $B_j$, $j=1,\dots,m$) and Friedrichs (for
$B_{m+1}$) inequalities and taking into account Lemma \ref{Dlm1}
we conclude that
\begin{gather}\label{Dconv1}
\forall k=1,\dots,m:\ \begin{cases}u_{k}\e\underset{\eps\to 0}\to
\mathbf{s}^{k}_j\text{ strongly in }L_2(B_j),&  j=1,\dots,m\\
u_{k}\e\underset{\eps\to 0}\to \mathbf{s}^{k}_{m+1}=0\text{
strongly in }L_2(B_{m+1}).
\end{cases}
\end{gather}

As in Lemma \ref{Nlm2} we construct an approximation $\mathbf{u}_k^\eps$ for the
eigenfunction $u_k\e$ by the formula
\begin{gather*}
\mathbf{u}_k\e(x)=U\e(x,\mathbf{s}^k)-
\suml_{l=1}^{k-1}({U}\e(\cdot,\mathbf{s}^k),u\e_l(\cdot))_{L_2(Y\e)}u_l\e(x).
\end{gather*}
Since $\mathbf{s}^k_{m+1}=0$ then $\mathbf{u}_k\e(x)\in 
\mathrm{dom}(A^{D,\eps})$.
Repeating word-by-word the arguments of Lemma \ref{Nlm2}  we conclude that
\begin{gather*}
\eps^{-2}{\|\nabla \mathbf{u}_k\e\|^2_{L_2(Y\e)}}\sim
\eps^{-2}{\|\nabla U\e(\cdot,\mathbf{s}^k)\|^2_{L_2(Y\e)}}\sim
F(\mathbf{s}_k) \quad (\eps\to 0),
\end{gather*}
while $\delta_k\e:=u_k\e-\mathbf{u}_k\e$ brings vanishingly small contribution 
to $u_k\e$, namely
\begin{gather}\label{Dwprop}
\liml_{\eps\to 0}\eps^{-2}\|\nabla \delta_k\e\|^2_{L_2(Y\e)}=0.
\end{gather}
The lemma is proved.

\end{proof}

\begin{lemma}\label{Dlm3}

The vectors $\mathbf{s}^{k}$, $k=1,\dots,m$ satisfy the following conditions:
\begin{gather*}
F(\mathbf{s}^{k})\leq F(s)\text{ for any }s\in
\mathbf{H}_0({\mathbf{s}}^1,\dots,{\mathbf{s}}^{k-1}),
\end{gather*}
where
$$\mathbf{H}_0({\mathbf{s}}^1,\dots,{\mathbf{s}}^{k-1})=\{s\in\mathbb{R}^{m+1}:\ 
s_{m+1}=0,\ (s,s)_B=1,\ (s,{\mathbf{s}}^l)_B=0,\ l=\overline{1,k-1}\}.$$
\end{lemma}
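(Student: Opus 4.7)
The plan is to follow the proof of Lemma \ref{Nlm3} essentially verbatim, replacing $\mathbf{H}(\mathbf{s}^1,\ldots,\mathbf{s}^{k-1})$ by $\mathbf{H}_0(\mathbf{s}^1,\ldots,\mathbf{s}^{k-1})$ throughout. The only new ingredient is the observation, already used in Lemma \ref{Dlm2}, that $\mathbf{s}^k_{m+1}=0$ for $k=1,\dots,m$; combined with the normalization \eqref{Dort} this guarantees in particular that $\mathbf{s}^k$ itself lies in $\mathbf{H}_0(\mathbf{s}^1,\dots,\mathbf{s}^{k-1})$.

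First, I would fix a minimizer $\tilde{\mathbf{s}}^k \in \mathbf{H}_0(\mathbf{s}^1,\dots,\mathbf{s}^{k-1})$ of $F$ on that set, which exists by compactness of the constraint set and continuity of $F$; this yields a priori $F(\tilde{\mathbf{s}}^k)\leq F(\mathbf{s}^k)$. I would then plug the trial function
\[
\hat u\e(x) = U\e(x,\tilde{\mathbf{s}}^k) - \suml_{l=1}^{k-1}\bigl(U\e(\cdot,\tilde{\mathbf{s}}^k),u_l\e\bigr)_{L_2(Y\e)} u_l\e(x)
\]
into the Dirichlet min-max principle \eqref{Dminimax1}. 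The crucial admissibility check is that, since $\tilde{\mathbf{s}}^k_{m+1}=0$, formula \eqref{Nvb} forces $U\e(\cdot,\tilde{\mathbf{s}}^k)=0$ on $\partial Y$; each $u_l\e$ with $l\leq k-1$ vanishes on $\partial Y$ as well, so $\hat u\e \in \overset{\circ}{H}(u_1\e,\dots,u_{k-1}\e)$ and is therefore an admissible trial function.

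Using the estimates \eqref{Nvbest1}--\eqref{Nvbest3+} applied to $U\e(\cdot,\tilde{\mathbf{s}}^k)$, together with the vanishing of the correction term (verified exactly as in \eqref{Dw1}, the essential inputs being the convergence \eqref{Dconv1} and the orthogonality conditions $(\tilde{\mathbf{s}}^k,\mathbf{s}^l)_B=0$, $l=1,\dots,k-1$, built into $\mathbf{H}_0$), I would obtain
\[
\liml_{\eps\to 0} \eps^{-2}\lambda_k^{D,\eps} \leq \liml_{\eps\to 0}\frac{\eps^{-2}\|\nabla \hat u\e\|^2_{L_2(Y\e)}}{\|\hat u\e\|^2_{L_2(Y\e)}} = \frac{F(\tilde{\mathbf{s}}^k)}{(\tilde{\mathbf{s}}^k,\tilde{\mathbf{s}}^k)_B} = F(\tilde{\mathbf{s}}^k).
\]
Comparing with \eqref{Dlm2main} of Lemma \ref{Dlm2} yields $F(\mathbf{s}^k)\leq F(\tilde{\mathbf{s}}^k)$, and combined with the opposite inequality this forces $F(\mathbf{s}^k)=F(\tilde{\mathbf{s}}^k)$. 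Hence $\mathbf{s}^k$ is itself a minimizer of $F$ on $\mathbf{H}_0(\mathbf{s}^1,\dots,\mathbf{s}^{k-1})$, which is the statement of the lemma. No serious obstacle arises: the whole argument is a routine adaptation of Lemma \ref{Nlm3}, and the only conceptual novelty is that the Dirichlet boundary condition on $\partial Y$ is encoded in the admissible set via the constraint $s_{m+1}=0$, which is precisely the reason the minimization runs over $\mathbf{H}_0$ rather than $\mathbf{H}$.
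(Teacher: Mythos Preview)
Your proposal is correct and matches the paper's own proof, which simply states that the argument is carried out as in Lemma \ref{Nlm3} taking into account that $\mathbf{s}^k_{m+1}=0$ for $k=1,\dots,m$. You have supplied exactly the details that adaptation requires, including the admissibility check $U\e(\cdot,\tilde{\mathbf{s}}^k)\in\overset{\circ}{H}(u_1\e,\dots,u_{k-1}\e)$ coming from $\tilde{\mathbf{s}}^k_{m+1}=0$.
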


\begin{proof}The lemma is proved similarly to Lemma \ref{Nlm3} taking into 
account that $\mathbf{s}_{m+1}^k=0$, $k=\overline{1,m}$.\end{proof}

We introduce the function $\mathcal{F}_0:\mathbb{R}^{m}\to\mathbb{R}$ by the 
formula
$$\mathcal{F}_0(q)=\suml_{j=1}^m \sigma_j
q_{j}^2.$$ We also introduce the vectors $\mathbf{q}^k\in
\mathbb{R}^{m}$ by the formula
$$\mathbf{q}_j^k=\mathbf{s}_j^k\sqrt{|B_j|},\ j=1,\dots,m.$$
Taking into account the equality $\mathbf{s}_{m+1}^k=0$
($k=1,\dots,m$) we can easily reformulate Lemmas
\ref{Dlm2}-\ref{Dlm3}.

\begin{corollary}\label{Dcor}
One has for $k=1,\dots,m$:
\begin{gather}\label{Dlmadd}
\liml_{\eps\to
0}\eps^{-2}\lambda_{k}^{D,\eps}=\mathcal{F}_0(\mathbf{q}^{k})\leq
\mathcal{F}_0(q)\text{ for any }q\in
\mathcal{H}_0({\mathbf{q}}^1,\dots,{\mathbf{q}}^{k-1}),
\end{gather}
where
$$\mathcal{H}_0({\mathbf{q}}^1,\dots,{\mathbf{q}}^{k-1})=\{q\in\mathbb{R}^{m}:\ 
(q,q)=1,\ (q,{\mathbf{q}}^l)=0,\ l=\overline{1,k-1}\}.$$
Here $(\cdot , \cdot)$ is the usual scalar product in $\mathbb{R}^{m}$.
\end{corollary}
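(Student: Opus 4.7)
The plan is to read Corollary \ref{Dcor} as a purely cosmetic reformulation of Lemmas \ref{Dlm2} and \ref{Dlm3}, obtained by an affine change of coordinates that is made legitimate by the key fact $\mathbf{s}_{m+1}^k = 0$ for $k=1,\dots,m$ (which was already established inside the proof of Lemma \ref{Dlm2} via the Friedrichs inequality on $B_{m+1}$). Essentially nothing analytic remains; the whole content is bookkeeping.

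First I would restrict the quadratic form $F$ from \eqref{F} to the hyperplane $\{s_{m+1}=0\}$. Substituting $s_{m+1}=0$ kills the cross terms, leaving $F(s) = \sum_{j=1}^m \sigma_j b_j s_j^2$. Then, using $b_j = |B_j|$ and the substitution $q_j := s_j\sqrt{|B_j|}$ for $j=1,\dots,m$, I would compute
\[
F(s) \;=\; \sum_{j=1}^m \sigma_j |B_j|\, \frac{q_j^2}{|B_j|} \;=\; \sum_{j=1}^m \sigma_j q_j^2 \;=\; \mathcal{F}_0(q).
\]
Evaluated at $s = \mathbf{s}^k$ (so that $q = \mathbf{q}^k$), this together with Lemma \ref{Dlm2} gives the limit equality $\lim_{\eps\to 0}\eps^{-2}\lambda_k^{D,\eps} = \mathcal{F}_0(\mathbf{q}^k)$.

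For the minimization statement, I would verify that the same substitution sends $\mathbf{H}_0(\mathbf{s}^1,\dots,\mathbf{s}^{k-1}) \subset \mathbb{R}^{m+1}$ bijectively onto $\mathcal{H}_0(\mathbf{q}^1,\dots,\mathbf{q}^{k-1}) \subset \mathbb{R}^{m}$. The normalization transforms as $(s,s)_B = \sum_{j=1}^m s_j^2|B_j| = \sum_{j=1}^m q_j^2 = (q,q)$ (the $j=m{+}1$ term vanishes because $s_{m+1}=0$), and the orthogonality conditions transform as $(s,\mathbf{s}^l)_B = \sum_{j=1}^m s_j \mathbf{s}_j^l |B_j| = (q,\mathbf{q}^l)$; the inverse map is $s_j = q_j/\sqrt{|B_j|}$ for $j \le m$ together with $s_{m+1}=0$. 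Hence every admissible $q$ comes from an admissible $s$ and vice versa. Applying Lemma \ref{Dlm3} to this correspondence yields $\mathcal{F}_0(\mathbf{q}^k) = F(\mathbf{s}^k) \le F(s) = \mathcal{F}_0(q)$ for every $q \in \mathcal{H}_0(\mathbf{q}^1,\dots,\mathbf{q}^{k-1})$, which is exactly \eqref{Dlmadd}.

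There is no genuine obstacle, since the analytic work has been front-loaded into Lemmas \ref{Dlm1}--\ref{Dlm3}; the only point requiring a moment's care is confirming that passing from $\mathbb{R}^{m+1}$ down to $\mathbb{R}^m$ is a \emph{bijection} of admissible sets, not merely an injection, so that the minimization statement in the new variables is not strictly weaker than the one in the old variables. This is guaranteed by the identity $s_{m+1}=0$ available at each $\mathbf{s}^k$ with $k \le m$, which makes the forward map onto and its inverse well-defined.
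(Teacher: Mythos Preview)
Your proposal is correct and matches exactly what the paper does: the paper states that, taking into account $\mathbf{s}_{m+1}^k=0$ for $k=1,\dots,m$, Corollary~\ref{Dcor} is an easy reformulation of Lemmas~\ref{Dlm2}--\ref{Dlm3}, and you have simply written out the change of variables $q_j=s_j\sqrt{|B_j|}$ that effects this reformulation. There is nothing to add.
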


Let us denote by  $E_\varkappa$ (here $\varkappa>0$ is a parameter) the 
$(m-1)$-dimensional ellipsoid
\begin{gather*}
E_\varkappa=\{q\in\mathbb{R}^{m}:\ 
\mathcal{F}_0(q)=\varkappa\}\textcolor{black}{.}
\end{gather*}
The numbers $\left\{ \sqrt{\varkappa\over{\sigma_j}}\right\}_{j=1}^m$ are the 
semi-axes of $E_\varkappa$.

We denote by $S$ the $(m-1)$-dimensional sphere
$$S=\{q\in\mathbb{R}^n:\ (q,q)=1\}\textcolor{black}{.}$$
It is clear that $E_\varkappa$ touches $S$ iff $\varkappa=\sigma_k$ for some 
$k$.

The ellipsoids $E_{\sigma_k}$, $k=1,\dots,m$ touch the sphere $S$ in two points 
$\tilde{\mathbf{q}}^k_{\pm}=\pm\underset{^{\overset{\qquad\quad\uparrow}{
\qquad\quad
k\text{-th place}}\qquad }}{(0,0,\dots,1,\dots,0)}$.

Using Corollary \ref{Dcor} via the same arguments as in the proof
of Theorem \ref{Nth} we obtain the following lemma.

\begin{lemma}\label{Dlm4}
One has for $k=1,\dots,m$:
\begin{gather*}
\mathbf{q}^k=\tilde{\mathbf{q}}^{k-1}_{-}\text{ or
}\mathbf{q}^k=-\tilde{\mathbf{q}}^{k-1}_{+}.
\end{gather*}
\end{lemma}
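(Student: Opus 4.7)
The plan is to mirror the geometric-variational argument of Lemma \ref{Nlm4}, which becomes considerably simpler in the Dirichlet setting because the quadratic form $\mathcal{F}_0$ is already diagonal in the standard basis of $\mathbb{R}^m$. By Corollary \ref{Dcor}, each $\mathbf{q}^k$ is a minimizer of $\mathcal{F}_0(q)=\sum_{j=1}^m\sigma_j q_j^2$ over the intersection of the unit sphere $S\subset\mathbb{R}^m$ with the orthogonal complement of $\mathbf{q}^1,\dots,\mathbf{q}^{k-1}$, so the task is to identify these constrained minimizers geometrically as tangency points between the level sets $E_\varkappa=\{\mathcal{F}_0=\varkappa\}$ and $S$.

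First I would record that the ellipsoid $E_\varkappa$ is axis-aligned with semi-axes $\sqrt{\varkappa/\sigma_j}$, so that by the strict ordering $\sigma_1<\sigma_2<\dots<\sigma_m$ from \eqref{sigma_ord}, $E_\varkappa$ meets $S$ tangentially precisely when $\varkappa=\sigma_k$ for some $k$, and the tangency points are then exactly the pair $\tilde{\mathbf{q}}^k_\pm=\pm e_k$, where $e_k$ is the $k$-th standard basis vector. This is the single geometric fact driving the lemma, and it is where strict inequality in \eqref{sigma_ord} is used — without it, the tangency locus would be a whole subsphere rather than a pair of antipodal points.

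Then I would argue by induction on $k$. For $k=1$ the unconstrained minimum of $\mathcal{F}_0$ on $S$ equals $\sigma_1$ and is attained only at $\pm e_1$, so $\mathbf{q}^1=\tilde{\mathbf{q}}^1_\pm$. Assuming the conclusion for all indices less than $k$, the orthogonality conditions $(q,\mathbf{q}^l)=0$ force $q_l=0$ for $l=1,\dots,k-1$, so minimization reduces to the unit sphere inside $\mathrm{span}(e_k,e_{k+1},\dots,e_m)$; on this reduced sphere, the diagonal form $\mathcal{F}_0$ has minimum $\sigma_k$, attained only at $\pm e_k$. This yields $\mathbf{q}^k=\tilde{\mathbf{q}}^k_\pm$ and, in view of $\mathcal{F}_0(\tilde{\mathbf{q}}^k_\pm)=\sigma_k$, the conclusion $\lim_{\eps\to 0}\eps^{-2}\lambda_k^{D,\eps}=\sigma_k$ of Theorem \ref{Dth}.

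There is no substantive obstacle here: the entire content of the lemma is the elementary fact that extremizing a diagonal positive quadratic form on a sphere with successive orthogonality constraints produces the coordinate axes in order of increasing coefficients. The only point requiring care is the verification that \eqref{sigma_ord} indeed gives uniqueness of the minimizer modulo sign at each inductive step, which closes the induction without any analogue of the auxiliary change of variables $f$ needed in the Neumann case.
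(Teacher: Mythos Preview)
Your argument is correct and follows the same route the paper indicates (``Using Corollary \ref{Dcor} via the same arguments as in the proof of Theorem \ref{Nth}''): you use Corollary \ref{Dcor} to identify $\mathbf{q}^k$ as a constrained minimizer of $\mathcal{F}_0$ on $S$, then exploit the diagonal form of $\mathcal{F}_0$ and the strict ordering \eqref{sigma_ord} to conclude inductively that $\mathbf{q}^k=\pm e_k$; your observation that no auxiliary linear map $f$ is needed here is exactly the simplification the Dirichlet case affords. Note that the index $k-1$ in the displayed statement is a typo carried over from Lemma \ref{Nlm4} (and $-\tilde{\mathbf{q}}^{k-1}_{+}$ equals $\tilde{\mathbf{q}}^{k-1}_{-}$ anyway); your conclusion $\mathbf{q}^k=\tilde{\mathbf{q}}^{k}_{\pm}$ is what is actually needed for $\lim_{\eps\to 0}\eps^{-2}\lambda_k^{D,\eps}=\sigma_k$.
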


It follows from Corollary \ref{Dcor} and Lemma \ref{Dlm4} that
$\liml_{\eps\to 0}\eps^{-2}\lambda_{k}^{D,\eps}=\sigma_{k}$.
Theorem \ref{Dth} is proved.

\end{proof}

\subsection{\label{ss5}Asymptotics of the first eigenvalues of $\theta$-periodic 
Laplacian}

To complete the proof of Theorem \ref{th1} we also need the
information about the behaviour of the eigenvalues of the
operators $\mathcal{A}^{\theta,\eps}$. It turns out that their
behaviour is the same as the behaviour of the 
eigenvalues of either Neumann or Dirichlet Laplacians. Namely, the following 
theorem is valid.

\begin{theorem}\label{Tth}
Let
$$\theta_1=(1,1,\dots,1),\ \theta_2=-(1,1,\dots,1).$$

Then for $k=1,\dots,m$ one has
\begin{gather}\label{Tthmain1}
\theta=\theta_1:\ \liml_{\eps\to
0}\eps^{-2}{\lambda}_{k+1}^{\theta,\eps}=\mu_{k},\\\label{Tthmain2}
\theta=\theta_2:\ \liml_{\eps\to
0}\eps^{-2}{\lambda}_k^{\theta,\eps}=\sigma_{k}.
\end{gather}
\end{theorem}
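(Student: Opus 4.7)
The strategy is to transfer the proofs of Theorems \ref{Nth} and \ref{Dth} to the $\theta$-periodic setting by substituting the space $H^1_\theta(Y\e)$ for $H^1(Y\e)$ (when $\theta=\theta_1$) or for $\widehat{H}^1_0(Y\e)$ (when $\theta=\theta_2$) in every variational characterization. The essential observation enabling this transfer is that the trial function $U\e(\cdot,s)$ defined in \eqref{Nvb} is identically equal to $s_{m+1}$ in a neighborhood of $\partial Y$, because the traps $S_j\e$ lie strictly inside the cube $Y$ (they are carried by $\partial B_j\subset Y$ at positive distance from $\partial Y$). Consequently $U\e(\cdot,s)\in H^1_{\theta_1}(Y\e)$ for every $s\in\mathbb{R}^{m+1}$, and $U\e(\cdot,s)\in H^1_{\theta_2}(Y\e)$ whenever $s_{m+1}=0$.

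For $\theta=\theta_1$: the zero eigenspace of the limit operator $\mathcal{A}^{\theta_1}$ has dimension $m+1$ (spanned by the indicator functions of $B_1,\dots,B_{m+1}$, since periodic conditions admit the constant on $B_{m+1}$), in complete parallel with the Neumann case, cf.\ \eqref{lambdaT1}. The proofs of Lemmas \ref{Nlm1}--\ref{Nlm5} then apply verbatim with $H^1_{\theta_1}(Y\e)$ replacing $H^1(Y\e)$ in \eqref{Nminimax1}, \eqref{hk}: the trial functions are admissible by the observation above, all estimates \eqref{Nvbest1}--\eqref{Nvbest3+} are unaffected, and the constrained minimization problem for the limit vectors $\mathbf{s}^k$ is identical. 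The matrix computation \eqref{det}--\eqref{kt} identifying the half-axes of the ellipsoid with $\mu_k$ is unchanged, which yields \eqref{Tthmain1}.

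For $\theta=\theta_2$: antiperiodic boundary conditions rule out nonzero constants on $B_{m+1}$, so $\mathcal{A}^{\theta_2}$ has an $m$-dimensional zero eigenspace, paralleling the Dirichlet case. The key new ingredient is a uniform Friedrichs-type inequality
\begin{equation*}
\|u\|_{L_2(B_{m+1})}^2\leq C\,\|\nabla u\|_{L_2(B_{m+1})}^2,\qquad u\in H^1_{\theta_2}(Y\e),
\end{equation*}
with $C$ depending only on the geometry of $B_{m+1}$. This inequality plays exactly the role of the Friedrichs inequality on $\widehat{H}^1_0(Y\e)$ in the Dirichlet case: it forces $\mathbf{s}^k_{m+1}=0$ for the limit vectors associated with the first $m$ eigenfunctions, after which the proofs of Lemmas \ref{Dlm1}--\ref{Dlm4} transfer word-by-word (the trial functions $U\e(\cdot,s)$ with $s_{m+1}=0$ are admissible since they vanish near $\partial Y$), delivering \eqref{Tthmain2}.

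The principal technical point is the uniform Friedrichs inequality in the antiperiodic case. I would establish it by extending $u$ to the doubled cube $\widetilde Y=(-1/2,3/2)\times(-1/2,1/2)^{n-1}$ via $\widetilde u(x_1+1,x')=-u(x_1,x')$; the antiperiodicity of $u$ across the face $x_1=1/2$ makes $\widetilde u$ an $H^1$ function on $\widetilde Y$ minus the (doubled) screens, while antisymmetry forces $\int_{\widetilde Y}\widetilde u\,dx=0$. The standard Poincar\'e inequality on the fixed domain $\widetilde Y\setminus\bigcup_j(\overline{B_j}\cup\overline{(B_j+e_1)})$ then yields the bound with a constant independent of $\eps$, since this domain does not depend on $\eps$.
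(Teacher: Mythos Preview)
Your proposal is correct and matches the paper's own proof essentially verbatim: the paper likewise reduces \eqref{Tthmain1} to a word-by-word repetition of Theorem~\ref{Nth} (noting that $U\e(\cdot,s)\in\mathrm{dom}(\A^{\theta_1,\eps})$ and that $\lambda_1^{\theta_1,\eps}=0$ with constant eigenfunction), and reduces \eqref{Tthmain2} to Theorem~\ref{Dth} once the Friedrichs inequality on $B_{m+1}$ is available for $H^1_{\theta_2}$. The only difference is that the paper cites \cite{Khrab3} for that Friedrichs inequality, whereas you supply a self-contained doubling argument; your argument is valid (the antiperiodic trace matching makes $\widetilde u\in H^1$ of the doubled $B_{m+1}$, the odd extension forces zero mean, and the doubled domain is connected, Lipschitz and $\eps$-independent).
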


\begin{remark}In fact it is possible to prove that \eqref{Tthmain2} is valid for 
an arbitrary $\theta\not=\theta_1$. However for the proof of Theorem \ref{th1} 
it is enough to prove \eqref{Tthmain2} only for $\theta=\theta_2$.
\end{remark}

\begin{proof}
The proof of \eqref{Tthmain1} is carried our word-by-word as the proof of 
Theorem \ref{Nth}. Indeed it is easy to see that when proving Theorem \ref{Nth} 
we have used only the following three facts that are specific for the Neumann 
boundary conditions:
\begin{itemize}
\item $\lambda_1^{N,\eps}=0$ and the corresponding eigenspace consists of 
constants,

\item There exists an orthonormal sequence of \textit{real} eigenfunctions of 
$A^{N,\eps}$,

\item For an arbitrary $s$ the function $U\e(\cdot,s)$ belongs to
$\mathrm{dom}(\mathcal{A}^{N,\eps})$.
\end{itemize}
However it is clear that all these properties are valid with $\theta_1$ instead 
of $N$.

The proof of \eqref{Tthmain2} is similarly to the proof of Theorem
\ref{Dth}. Indeed the proof of Theorem \ref{Dth} uses the
following three facts that are specific for the Dirichlet boundary
conditions:
\begin{itemize}

\item The Friedrichs inequality
\begin{gather}\label{Fried}
\|u\|^2_{L_2(B_{m+1})}\leq C_D\|\nabla u\|^2_{L_2(B_{m+1})},\ u\in 
\mathrm{dom}(\eta^{D,\eps})
\end{gather}
is valid. Here the constant $C_D>0$ is independent of $u$.

\item There exists an orthonormal sequence of \textit{real} eigenfunctions of 
$A^{N,\eps}$,

\item For an arbitrary $s\in \{s\in \mathbb{R}^{m+1}:\ s^{m+1}=0\}$ the function 
$U\e(\cdot,s)$ belongs to $\mathrm{dom}(\mathcal{A}^{D,\eps})$.

\end{itemize}
Inequality \eqref{Fried} with $\theta\not=\theta_1$ instead of $D$
was proved in \cite{Khrab3} for the case $m=1$. In the  case $m>1$
the proof is similar. Obviously, the remaining conditions are also
valid for $\theta_2$ instead of $D$.
\end{proof}

\subsection{\label{ss6} End of the proof of Theorem \ref{th1}}

It follows from
\eqref{AA} and \eqref{repres1} that
\begin{gather}\label{sp}
\sigma(\mathcal{A}\e)=\cupl_{k=1}^\infty [a_k^-(\eps),a_k^+(\eps)]
\end{gather}
where the compact intervals $[a_k^-(\eps),a_k^+(\eps)]$ are
defined by
\begin{gather}\label{interval}
[a_k^-(\eps),a_k^+(\eps)]= \cupl_{\theta\in \mathbb{T}^n}
\left\{\eps^{-2}\lambda_k^{\theta,\eps}\right\}.
\end{gather}

It follows from \eqref{enclosure} and \eqref{interval} that
\begin{gather}\label{double1}
\eps^{-2}\lambda_k^{N,\eps}\leq a_k^-(\eps)\leq
\eps^{-2}\lambda_k^{\theta_{1},\eps},\\\label{double2}
\eps^{-2}\lambda_k^{\theta_{2},\eps}\leq a_k^+(\eps)\leq
\eps^{-2}\lambda_k^{D,\eps}.
\end{gather}
Obviously if $k=1$ then the left and right-hand-sides of
\eqref{double1} are equal to zero. It follows from
\eqref{Nthmain}, \eqref{Tthmain1} that in the case $k=2,\dots,m+1$
they both converge to $\mu_{k-1}$ as $\eps\to 0$, while if $k>
m+1$ they converge to infinity in view of \eqref{lambdaN},
\eqref{lambdaT1}, \eqref{conv1}, \eqref{conv3}. Thus
\begin{gather}\label{a-}
a_1^-(\eps)=0,\quad \liml_{\eps\to 0}a_k^-(\eps)=\mu_{k-1}\text{
if }2\leq k\leq m+1,\quad \liml_{\eps\to
0}a_k^-(\eps)=\infty\text{ if }k>m+1.
\end{gather}
Similarly in view of \eqref{lambdaD}, \eqref{lambdaT2},
\eqref{conv2}, \eqref{conv3}, \eqref{Dthmain}, \eqref{Tthmain2}, \eqref{double2} 
one
has
\begin{gather}\label{a+}
\liml_{\eps\to 0}a_k^+(\eps)=\sigma_k\text{ if }1\leq k\leq
m,\quad  \liml_{\eps\to 0}a_k^+(\eps)=\infty\text{ if }k>m.
\end{gather}
Then \eqref{th1_f1} -\eqref{th1_f2} follow directly from
\eqref{sp}, \eqref{a-}, \eqref{a+}. Theorem \ref{th1} is proved.

\section*{Acknowledgements} The author is grateful to Prof. E. Khruslov
for the helpful discussions. This work is supported by the German
Research Foundation (DFG) through Research Training Group 1294
"Analysis, Simulation and Design of Nanotechnological Processes".


\bibliographystyle{elsarticle-num}

\end{document}